\title[Filling space with hypercubes of two sizes]{Filling space with hypercubes of two sizes --\\ The pythagorean tiling in higher dimensions}
\author{Jakob F\"uhrer}
\address{Institute of Analysis and Number Theory \\ Technical University of Graz\\
Kopernikusgasse 24/II\\
8010 Graz, Austria}
\email{jakob.fuehrer@tugraz.at}
\date{}
\subjclass{52C22, 51M20, 05B45}
\tikzset{plane/.style n args={3}{insert path={%
#1 -- ++ #2 -- ++ #3 -- ++ ($-1*#2$) -- cycle}},
unit xy plane/.style={plane={#1}{(1,0,0)}{(0,1,0)}},
unit xz plane/.style={plane={#1}{(1,0,0)}{(0,0,1)}},
unit yz plane/.style={plane={#1}{(0,1,0)}{(0,0,1)}},
get projections/.style={insert path={%
let \p1=(1,0,0),\p2=(0,1,0)  in 
[/utils/exec={\pgfmathtruncatemacro{\xproj}{sign(\x1)}\xdef\xproj{\xproj}
\pgfmathtruncatemacro{\yproj}{sign(\x2)}\xdef\yproj{\yproj}
\pgfmathtruncatemacro{\zproj}{sign(cos(\tdplotmaintheta))}\xdef\zproj{\zproj}}]}},
pics/unit cube/.style={code={
\path[get projections];
\draw(0,0,0) -- (1,1,1);
\ifnum\zproj=-1
 \path[3d cube/every face,3d cube/xy face,unit xy plane={(0,0,0)}]; 
\fi
\ifnum\yproj=1
 \path[3d cube/every face,3d cube/yz face,unit yz plane={(1,0,0)}]; 
\else
 \path[3d cube/every face,3d cube/yz face,unit yz plane={(0,0,0)}]; 
\fi
\ifnum\xproj=1
 \path[3d cube/every face,3d cube/xz face,unit xz plane={(0,0,0)}]; 
\else
 \path[3d cube/every face,3d cube/xz face,unit xz plane={(0,1,0)}]; 
\fi
\ifnum\zproj>-1
 \path[3d cube/every face,3d cube/xy face,unit xy plane={(0,0,1)}]; 
\fi
}},
pics/unit cube2/.style={code={
\path[get projections];
\draw (0,0,0) -- (1,1,1);
\ifnum\zproj=-1
 \path[3d cube/every face,3d cube/xy face2,unit xy plane={(0,0,0)}]; 
\fi
\ifnum\yproj=1
 \path[3d cube/every face,3d cube/yz face2,unit yz plane={(1,0,0)}]; 
\else
 \path[3d cube/every face,3d cube/yz face2,unit yz plane={(0,0,0)}]; 
\fi
\ifnum\xproj=1
 \path[3d cube/every face,3d cube/xz face2,unit xz plane={(0,0,0)}]; 
\else
 \path[3d cube/every face,3d cube/xz face2,unit xz plane={(0,1,0)}]; 
\fi
\ifnum\zproj>-1
 \path[3d cube/every face,3d cube/xy face2,unit xy plane={(0,0,1)}]; 
\fi
}},
pics/unit cube000/.style={code={
\path[get projections];
\draw(0,0,0) -- (1,1,1);
\ifnum\zproj=-1
 \path[thick,3d cube/every face,3d cube/xy face000,unit xy plane={(0,0,0)}]; 
\fi
\ifnum\yproj=1
 \path[thick,3d cube/every face,3d cube/yz face000,unit yz plane={(1,0,0)}]; 
\else
 \path[thick,3d cube/every face,3d cube/yz face000,unit yz plane={(0,0,0)}]; 
\fi
\ifnum\xproj=1
 \path[thick,3d cube/every face,3d cube/xz face000,unit xz plane={(0,0,0)}]; 
\else
 \path[thick,3d cube/every face,3d cube/xz face000,unit xz plane={(0,1,0)}]; 
\fi
\ifnum\zproj>-1
 \path[thick,3d cube/every face,3d cube/xy face000,unit xy plane={(0,0,1)}]; 
\fi
}},
pics/unit cube001/.style={code={
\path[get projections];
\draw(0,0,0) -- (1,1,1);
\ifnum\zproj=-1
 \path[3d cube/every face,3d cube/xy face001,unit xy plane={(0,0,0)}]; 
\fi
\ifnum\yproj=1
 \path[3d cube/every face,3d cube/yz face001,unit yz plane={(1,0,0)}]; 
\else
 \path[3d cube/every face,3d cube/yz face001,unit yz plane={(0,0,0)}]; 
\fi
\ifnum\xproj=1
 \path[3d cube/every face,3d cube/xz face001,unit xz plane={(0,0,0)}]; 
\else
 \path[3d cube/every face,3d cube/xz face001,unit xz plane={(0,1,0)}]; 
\fi
\ifnum\zproj>-1
 \path[3d cube/every face,3d cube/xy face001,unit xy plane={(0,0,1)}]; 
\fi
}},
pics/unit cube010/.style={code={
\path[get projections];
\draw(0,0,0) -- (1,1,1);
\ifnum\zproj=-1
 \path[3d cube/every face,3d cube/xy face010,unit xy plane={(0,0,0)}]; 
\fi
\ifnum\yproj=1
 \path[3d cube/every face,3d cube/yz face010,unit yz plane={(1,0,0)}]; 
\else
 \path[3d cube/every face,3d cube/yz face010,unit yz plane={(0,0,0)}]; 
\fi
\ifnum\xproj=1
 \path[3d cube/every face,3d cube/xz face010,unit xz plane={(0,0,0)}]; 
\else
 \path[3d cube/every face,3d cube/xz face010,unit xz plane={(0,1,0)}]; 
\fi
\ifnum\zproj>-1
 \path[3d cube/every face,3d cube/xy face010,unit xy plane={(0,0,1)}]; 
\fi
}},
pics/unit cube011/.style={code={
\path[get projections];
\draw(0,0,0) -- (1,1,1);
\ifnum\zproj=-1
 \path[3d cube/every face,3d cube/xy face011,unit xy plane={(0,0,0)}]; 
\fi
\ifnum\yproj=1
 \path[3d cube/every face,3d cube/yz face011,unit yz plane={(1,0,0)}]; 
\else
 \path[3d cube/every face,3d cube/yz face011,unit yz plane={(0,0,0)}]; 
\fi
\ifnum\xproj=1
 \path[3d cube/every face,3d cube/xz face011,unit xz plane={(0,0,0)}]; 
\else
 \path[3d cube/every face,3d cube/xz face011,unit xz plane={(0,1,0)}]; 
\fi
\ifnum\zproj>-1
 \path[3d cube/every face,3d cube/xy face011,unit xy plane={(0,0,1)}]; 
\fi
}},
pics/unit cube100/.style={code={
\path[get projections];
\draw(0,0,0) -- (1,1,1);
\ifnum\zproj=-1
 \path[3d cube/every face,3d cube/xy face100,unit xy plane={(0,0,0)}]; 
\fi
\ifnum\yproj=1
 \path[3d cube/every face,3d cube/yz face100,unit yz plane={(1,0,0)}]; 
\else
 \path[3d cube/every face,3d cube/yz face100,unit yz plane={(0,0,0)}]; 
\fi
\ifnum\xproj=1
 \path[3d cube/every face,3d cube/xz face100,unit xz plane={(0,0,0)}]; 
\else
 \path[3d cube/every face,3d cube/xz face100,unit xz plane={(0,1,0)}]; 
\fi
\ifnum\zproj>-1
 \path[3d cube/every face,3d cube/xy face100,unit xy plane={(0,0,1)}]; 
\fi
}},
pics/unit cube101/.style={code={
\path[get projections];
\draw(0,0,0) -- (1,1,1);
\ifnum\zproj=-1
 \path[3d cube/every face,3d cube/xy face101,unit xy plane={(0,0,0)}]; 
\fi
\ifnum\yproj=1
 \path[3d cube/every face,3d cube/yz face101,unit yz plane={(1,0,0)}]; 
\else
 \path[3d cube/every face,3d cube/yz face101,unit yz plane={(0,0,0)}]; 
\fi
\ifnum\xproj=1
 \path[3d cube/every face,3d cube/xz face101,unit xz plane={(0,0,0)}]; 
\else
 \path[3d cube/every face,3d cube/xz face101,unit xz plane={(0,1,0)}]; 
\fi
\ifnum\zproj>-1
 \path[3d cube/every face,3d cube/xy face101,unit xy plane={(0,0,1)}]; 
\fi
}},
pics/unit cube110/.style={code={
\path[get projections];
\draw(0,0,0) -- (1,1,1);
\ifnum\zproj=-1
 \path[3d cube/every face,3d cube/xy face110,unit xy plane={(0,0,0)}]; 
\fi
\ifnum\yproj=1
 \path[3d cube/every face,3d cube/yz face110,unit yz plane={(1,0,0)}]; 
\else
 \path[3d cube/every face,3d cube/yz face110,unit yz plane={(0,0,0)}]; 
\fi
\ifnum\xproj=1
 \path[3d cube/every face,3d cube/xz face110,unit xz plane={(0,0,0)}]; 
\else
 \path[3d cube/every face,3d cube/xz face110,unit xz plane={(0,1,0)}]; 
\fi
\ifnum\zproj>-1
 \path[3d cube/every face,3d cube/xy face110,unit xy plane={(0,0,1)}]; 
\fi
}},
pics/unit cube111/.style={code={
\path[get projections];
\draw(0,0,0) -- (1,1,1);
\ifnum\zproj=-1
 \path[3d cube/every face,3d cube/xy face111,unit xy plane={(0,0,0)}]; 
\fi
\ifnum\yproj=1
 \path[3d cube/every face,3d cube/yz face111,unit yz plane={(1,0,0)}]; 
\else
 \path[3d cube/every face,3d cube/yz face111,unit yz plane={(0,0,0)}]; 
\fi
\ifnum\xproj=1
 \path[3d cube/every face,3d cube/xz face111,unit xz plane={(0,0,0)}]; 
\else
 \path[3d cube/every face,3d cube/xz face111,unit xz plane={(0,1,0)}]; 
\fi
\ifnum\zproj>-1
 \path[3d cube/every face,3d cube/xy face111,unit xy plane={(0,0,1)}]; 
\fi
}},
3d cube/.cd,
xy face/.style={fill=blue!10},
xz face/.style={fill=gray!30},
yz face/.style={fill=gray!50},
xy face000/.style={draw=gray!10,line width=0mm,fill=gray!10},
xz face000/.style={draw=gray!30,line width=0mm,fill=gray!30},
yz face000/.style={draw=gray!50,line width=0mm,fill=gray!50},
xy face001/.style={draw=gray!10,line width=0mm,fill=gray!10},
xz face001/.style={draw=gray!30,line width=0mm,fill=gray!30},
yz face001/.style={draw=gray!50,line width=0mm,fill=gray!50},
xy face010/.style={draw=gray!10,line width=0mm,fill=gray!10},
xz face010/.style={draw=gray!30,line width=0mm,fill=gray!30},
yz face010/.style={draw=gray!50,line width=0mm,fill=gray!50},
xy face011/.style={draw=gray!10,line width=0mm,fill=gray!10},
xz face011/.style={draw=gray!30,line width=0mm,fill=gray!30},
yz face011/.style={draw=gray!50,line width=0mm,fill=gray!50},
xy face100/.style={draw=gray!10,line width=0mm,fill=gray!10},
xz face100/.style={draw=gray!30,line width=0mm,fill=gray!30},
yz face100/.style={draw=gray!50,line width=0mm,fill=gray!50},
xy face101/.style={draw=gray!10,line width=0mm,fill=gray!10},
xz face101/.style={draw=gray!30,line width=0mm,fill=gray!30},
yz face101/.style={draw=gray!50,line width=0mm,fill=gray!50},
xy face110/.style={draw=gray!10,line width=0mm,fill=gray!10},
xz face110/.style={draw=gray!30,line width=0mm,fill=gray!30},
yz face110/.style={draw=gray!50,line width=0mm,fill=gray!50},
xy face111/.style={draw=gray!10,line width=0mm,fill=gray!10},
xz face111/.style={draw=gray!30,line width=0mm,fill=gray!30},
yz face111/.style={draw=gray!50,line width=0mm,fill=gray!50},
xy face2/.style={fill=gray!10},
xz face2/.style={fill=gray!30},
yz face2/.style={fill=gray!50},
num cubes x/.estore in=\NumCubesX,
num cubes y/.estore in=\NumCubesY,
num cubes z/.estore in=\NumCubesZ,
num cubes x=1,num cubes y/.initial=1,num cubes z/.initial=1,
cube scale/.initial=1,
every face/.style={draw,very thick},
/tikz/pics/.cd,
cube array/.style={code={%
 \tikzset{3d cube/.cd,#1}
  \path[get projections];
  \ifnum\yproj=1
   
  \else 
   \ifnum\NumCubesX>1
    \pgfmathtruncatemacro{\NextToLast}{\NumCubesX-1}
    
   \else
       
   \fi 
  \fi
  \ifnum\xproj=-1
   
  \else 
   \ifnum\NumCubesY>1
    \pgfmathtruncatemacro{\NextToLast}{\NumCubesX-1}
    
   \else
       
   \fi 
  \fi
  \ifnum\zproj=1
   
  \else 
   \ifnum\NumCubesZ>1
    \pgfmathtruncatemacro{\NextToLast}{\NumCubesX-1}
    
   \else
       
   \fi 
   
  \fi

}}
}
\begin{document}
\baselineskip=17pt

 \maketitle
\newtheorem{theorem}{Theorem}
\newtheorem*{klar}{Klar}
\newtheorem{method}{Method}

\newtheorem{lemma}{Lemma}

\newtheorem{cor}{Corollary}

\newtheorem{conjecture}{Conjecture}

\theoremstyle{definition}
\newtheorem{defi}{Definition}

\newtheorem{bsp}{Beispiel}

\newtheorem*{bem}{Bemerkung}

\newtheorem*{vorschau}{Vorschau}

\newtheorem*{erg}{Ergänzung}

\theoremstyle{remark}
\newtheorem{remark}{Remark}

\newtheorem*{notation}{Notation}

\newtheorem{claim}{Claim}[theorem]
\renewcommand{\theclaim}{\arabic{claim}}
\newenvironment{proofofclaim}[1][\proofname\ of Claim \theclaim]{%
  \proof[#1]%
  \renewcommand\qedsymbol{$\blacksquare$}
}{\endproof}

\newcommand{\ndiv}{\not \hspace{3pt} \mid }

\newcommand*\hexbrace[2]{%
  \underset{#2}{\underbrace{\rule{#1}{0pt}}}}

\section*{Abstract}
\label{cha:abstract}

We construct a unilateral lattice tiling of $\mathbb{R}^n$ into hypercubes of two differnet side lengths $p$ or $q$. This generalizes the Pythagorean tiling in $\mathbb{R}^2$. We also show that this tiling is unique up to symmetries, which proves a variation of a conjecture by B\"olcskei from 2001. For positive integers $p$ and $q$ this tiling also provides a tiling of $(\mathbb{Z}/(p^n+q^n)\mathbb{Z})^n$.

\section{Introduction}

In 1907 Minkowski \cite{minkowski1907diophantische} conjectured that any tiling of $\mathbb{R}^n$ into $n$-dimensional unit hypercubes whose center points form a lattice has to contain two hypercubes that share a full facet. Keller \cite{keller1930uber} generalized this conjecture by allowing any tiling of $\mathbb{R}^n$ into $n$-dimensional unit hypercubes. Perron \cite{perron1940luckenlose} proved Keller's conjecture for $n\leq 6$ in 1940 and Hajos \cite{hajos1942einfache} proved Minkowski's original conjecture in 1942.

In 1992 Lagarias and Shor \cite{lagarias1992keller} proved Keller's conjecture to be false, in particular they showed it is false for $n\geq 10$. The gaps were filled by Mackey \cite{mackey2002cube} for dimensions $8$ and $9$ and Brakensiek et. al. \cite{brakensiek2020resolution} for dimension $7$, showing that $7$ is the smallest dimension where Keller's conjecture is true. For additional information see the survey on unit cubes by Zong \cite{zong2005known}.

In $\mathbb{R}^2$ there are some related results:
\begin{itemize}
\item The well known Pythagorean tiling (Figure \ref{phytil}) is a lattice tiling of $\mathbb{R}^2$ into squares of two sizes as well as the unique unilateral and equitransitive tiling into squares of two sizes (\cite{grunbaum1987tilings}, \cite{martini}).

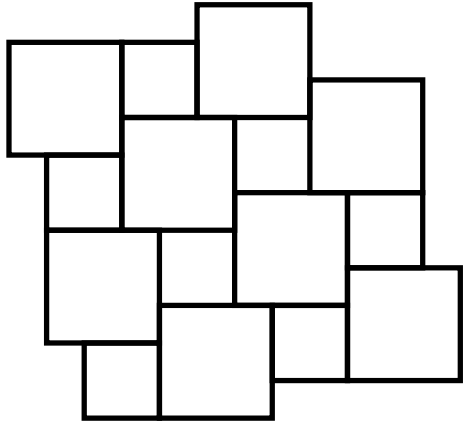
\begin{figure}

\begin{tikzpicture}[yscale=1,scale=0.5]

\begin{scope}

\begin{scope}[xshift=3cm,yshift=1cm]

\draw[-, line width=2pt] (0,0) -- (0,2) -- (2,2) -- (2,0) -- cycle;
\end{scope}

\begin{scope}[xshift=2cm,yshift=6cm]

\draw[-, line width=2pt] (0,0) -- (0,2) -- (2,2) -- (2,0) -- cycle;
\end{scope}

\begin{scope}[xshift=8cm,yshift=2cm]

\draw[-, line width=2pt] (0,0) -- (0,2) -- (2,2) -- (2,0) -- cycle;
\end{scope}

\begin{scope}[xshift=10cm,yshift=5cm]

\draw[-, line width=2pt] (0,0) -- (0,2) -- (2,2) -- (2,0) -- cycle;
\end{scope}

\begin{scope}[xshift=4cm,yshift=9cm]

\draw[-, line width=2pt] (0,0) -- (0,2) -- (2,2) -- (2,0) -- cycle;
\end{scope}

\begin{scope}[xshift=2cm,yshift=3cm]

\draw[-,line width=2pt] (0,0) -- (0,3) -- (3,3) -- (3,0) -- cycle;
\end{scope}

\begin{scope}[xshift=4cm,yshift=6cm]

\draw[-, line width=2pt] (0,0) -- (0,3) -- (3,3) -- (3,0) -- cycle;
\end{scope}

\begin{scope}[xshift=6cm,yshift=9cm]

\draw[-, line width=2pt] (0,0) -- (0,3) -- (3,3) -- (3,0) -- cycle;
\end{scope}

\begin{scope}[xshift=5cm,yshift=1cm]

\draw[-,line width=2pt] (0,0) -- (0,3) -- (3,3) -- (3,0) -- cycle;
\end{scope}

\begin{scope}[xshift=7cm,yshift=4cm]

\draw[-, line width=2pt] (0,0) -- (0,3) -- (3,3) -- (3,0) -- cycle;
\end{scope}

\begin{scope}[xshift=9cm,yshift=7cm]

\draw[-, line width=2pt] (0,0) -- (0,3) -- (3,3) -- (3,0) -- cycle;
\end{scope}

\begin{scope}[xshift=10cm,yshift=2cm]

\draw[-, line width=2pt] (0,0) -- (0,3) -- (3,3) -- (3,0) -- cycle;
\end{scope}

\begin{scope}[xshift=1cm,yshift=8cm]

\draw[-,line width=2pt] (0,0) -- (0,3) -- (3,3) -- (3,0) -- cycle;
\end{scope}

\end{scope}

\end{tikzpicture}
\caption{The Pythagorean tiling}
\label{phytil}
\end{figure}

\begin{figure}

\foreach \Angle in {50} 
{\tdplotsetmaincoords{60}{\Angle} 

\begin{tikzpicture}[line join=round,font=\sffamily,3d cube/.cd,
num cubes x=1,num cubes y=1,num cubes z=1]

 \path[use as bounding box] (-1,-2) rectangle (8,6);
\begin{scope}[local bounding box=first row]

 \begin{scope}[tdplot_main_coords,local bounding box=array]
 \input{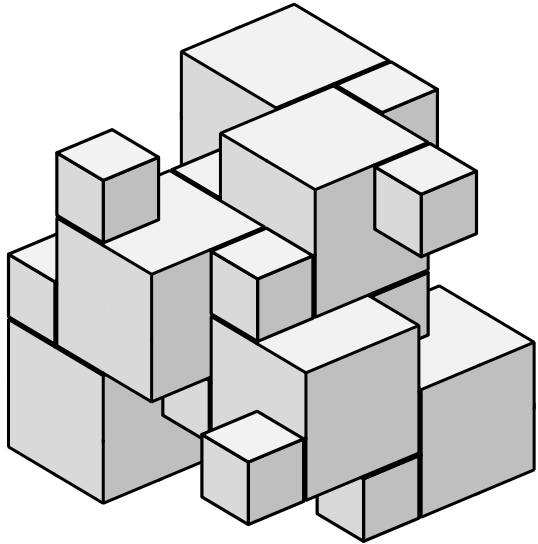}
 \end{scope}

\end{scope}

 {
}

\end{tikzpicture}}
\caption{Roger's filling}
\label{rogfil}
\end{figure}

\item There are exactly eight different unilateral and equitransitive tilings of $\mathbb{R}^2$ into squares of three sizes (\cite{bolcskei2000classification}, \cite{grunbaum1987tilings}, \cite{martini}, \cite{schattschneider2000unilateral}).

\item $\mathbb{R}^2$ can be tiled into squares of pairwise distinct integer side lengths in $2^{\aleph_0}$ ways. For $n\geq 3$ this is not possible not even if we only require neighbouring cubes to have different sizes (\cite{dawson1984filling}).

\item Sprague \cite{sprague1939beispiel} gave the first example of a tiling of a square into squares of pairwise different integer side lengths (see also \cite{MR511994} and \cite{grunbaum1987tilings}).

\item Meir and Moser \cite{MEIR1968126} asked whether a square of area $\sum_{n=1}^\infty 1/n^2=\pi^2/6$ can be tiled by the squares of side length $1/n$ for $n\in\mathbb{Z}_{\geq 1}$. The problem remains open but Januszewski and Zielonka \cite{januszewski2021note} showed that for $1/2 < t \leq 2/3$, the square of area $\sum_{n=1}^\infty 1/n^{2t}$ can be tiled by the squares of side length $1/n^t$ for $n\in\mathbb{Z}_{\geq 1}$ and Tao \cite{tao2022perfectly} showed that for every $2/3 < t < 1$ there exists a $n_0\in\mathbb{N}$ such that the square of area $\sum_{n=n_0}^\infty 1/n^{2t}$ can be tiled by the squares of side length $1/n^t$ for $n\in\mathbb{Z}_{\geq n_0}$.
\end{itemize}

In order to state the main result, we need the following definitons: A \emph{tiling} is a partition into closed sets without holes, called tiles, whose pairwise intersection is of Lebesgue measure zero. We further call sets of Lebesgue measure zero \emph{null sets}. For tilings of $\mathbb{R}^n$ into hypercubes, we call a tiling \emph{unilateral} if no two hypercubes of the same size share a full facet and \emph{equitransitive} if any two hypercubes of the same size can be mapped to each other by an isomorphism that keeps the tiling unchanged. \emph{Lattice tilings} are tilings where the center points of all hypercubes of the same size form a lattice and are therefore also equitransitive.

In 2001, B\"olcskei \cite{MR1874434} proved that Roger's filling (Figure \ref{rogfil}) is the only unilateral and equitransitive tiling of $\mathbb{R}^3$ into cubes of two sizes and conjectured this to be true for any higher dimension. 

\begin{conjecture}[B\"olcskei \cite{MR1874434}]
\label{conj}
In every dimension $d$ there exists precisely one equitransitive
unilateral tiling by $d$-dimensional cubes of two sizes.
\end{conjecture}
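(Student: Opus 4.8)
The plan is to prove the statement in two parts, existence and uniqueness. Throughout I fix side lengths $p>q$ and regard two tilings as the same if an isometry of $\mathbb{R}^n$ (a translation composed with an element of the hyperoctahedral group) carries one onto the other; across different ratios $p/q$ the resulting tilings are all similar, so ``precisely one'' is to be read up to this equivalence.

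For existence I would generalize the Pythagorean tiling and Roger's filling explicitly. As prototile I take the union of one large cube $[0,p]^n$ with one small cube placed flush against a facet, for instance $[p,p+q]\times[0,q]^{n-1}$, a region of volume $p^n+q^n$; and as translation lattice $\Lambda$ the one generated by the columns of $pI+qS$, where $S$ is the signed cyclic shift $Se_i=e_{i+1}$ for $i<n$ and $Se_n=(-1)^{n+1}e_1$. Since $S$ has characteristic polynomial $x^n+(-1)^n$, an eigenvalue computation gives $\det(pI+qS)=p^n+q^n$, exactly the volume of the prototile, and for $n=2$ the columns are $(p,q)$ and $(-q,p)$, recovering the classical Pythagorean lattice $\langle(p,q),(-q,p)\rangle$. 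I would then verify that the $\Lambda$-translates of the two cubes cover $\mathbb{R}^n$ with pairwise null intersections and that no two equal cubes share a full facet, so the tiling is unilateral; since the centers of the large and of the small cubes each form a single translate of $\Lambda$, it is a lattice tiling, hence equitransitive. Reducing $\Lambda$ modulo $p^n+q^n$ then yields the claimed tiling of $(\mathbb{Z}/(p^n+q^n)\mathbb{Z})^n$ for integer $p,q$.

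For uniqueness I would start from an arbitrary unilateral equitransitive tiling by cubes of two sizes and reconstruct it from local data. The decisive step is to analyse how the facets of a large cube are covered: unilaterality forbids a single equal large cube opposite a $p$-facet, and because $q<p$ the neighbours are forced into a staircase whose covering is offset by a fixed vector in each coordinate direction. I would prove that this local pattern is rigid --- the offsets and the position of the abutting small cube are determined --- and then invoke equitransitivity, which makes every large cube and every small cube sit in an identical neighbourhood, to propagate the pattern over all of $\mathbb{R}^n$. Matching the resulting group of translations with the circulant lattice $\Lambda$ above identifies the tiling with the constructed one up to an isometry.

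The main obstacle is this rigidity analysis: one must classify the admissible ways cubes of two sizes can abut along a facet and around a vertex and show that only the staircase is compatible with filling space. A direct case distinction grows far too fast with $n$, so I expect the real content to lie either in an inductive slicing argument that reduces an $n$-dimensional junction to lower-dimensional ones, or in an algebraic reformulation where tilings correspond to factorizations of $(\mathbb{Z}/(p^n+q^n)\mathbb{Z})^n$ by the two cube shapes and uniqueness becomes a statement about complementing pairs. I also expect the equitransitivity hypothesis to be used essentially, and upgrading the argument from lattice tilings --- the variation actually within reach --- to the full equitransitive class of B\"olcskei's original formulation is where I anticipate needing the most additional input.
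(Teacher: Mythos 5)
Your proposal does not close, and cannot be compared against, a proof in the paper, because the statement you are addressing is a conjecture that the paper itself leaves open: the paper proves only the variation (Theorem \ref{main}) in which ``equitransitive'' is replaced by ``lattice tiling'', and it remarks explicitly that with respect to uniqueness the conjecture is \emph{stronger} than that theorem. The gap in your plan is exactly this point, and you name it yourself in your last paragraph. Equitransitivity only guarantees that for any two cubes of the same size there is \emph{some} symmetry of the tiling carrying one to the other; such symmetries may involve rotations and reflections, the cube centers need not form a lattice, and a priori the symmetry group need not contain any translations at all. So ``invoke equitransitivity to propagate the local pattern'' is not a routine propagation step --- it is the entire open content. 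Note how heavily the paper's uniqueness section leans on the lattice hypothesis: already Lemma \ref{smallcubes} (two small cubes cannot touch) takes the difference $x\in B\mathbb{Z}^n$ of the centers of two touching small cubes and applies that \emph{same} translation to a big cube to produce an overlap --- an inference with no analogue in a merely equitransitive tiling. The subsequent lemmas (Lemma \ref{opposite}, Lemma \ref{uniqueneighbour}) and the final cycle argument, which identifies the generating matrix $C$ with $A$ up to an element of $B'_n$ and concludes $C\mathbb{Z}^n=B\mathbb{Z}^n$, are all phrased for cubes translated by a fixed full-rank lattice $B\mathbb{Z}^n$. Your local rigidity analysis would have to be redone without that crutch, and no one (including the paper) has done so.

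Your existence half, by contrast, is essentially the paper's construction in Theorem \ref{thmint1}: a circulant-type matrix whose determinant is $p^n+q^n$, with prototile one large cube and one small cube flush against a facet (your $pI+qS$ is the paper's $A$ up to reindexing and signs, and your $n=2$ case is the Pythagorean lattice). The step you defer with ``I would then verify'' is where the paper does the actual work: Lemma \ref{lem1} proves the prototile is a system of representatives of $\mathbb{R}^n/A\mathbb{Z}^n$ up to null sets, via explicit column-vector reductions plus a Lebesgue-measure count, and a separate short lemma ($pe_i,qe_i\notin A\mathbb{Z}^n$) gives unilaterality. Carried out, this yields the existence direction of Conjecture \ref{conj}, since lattice tilings are equitransitive. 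One side remark in your setup is wrong: tilings for different ratios $p/q$ are \emph{not} all similar (a similarity preserves the ratio of the two side lengths); the correct reading of ``precisely one'' is: for each fixed pair of sizes, exactly one tiling up to isometry. In summary: your existence argument parallels the paper and is sound in outline, but your uniqueness argument has a genuine, acknowledged gap, and that gap coincides with the part of Conjecture \ref{conj} that remains unproven even after this paper.
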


We prove the following variation of the conjecture, where we consider lattice tiling instead of equitransitive tilings:

\begin{theorem}
\label{main}
Let $p,q\in\mathbb{R}_+$ with $q<p$ and $n\in\mathbb{N}_{\geq2}$. There exists exactly one unilateral lattice tiling of $\mathbb{R}^n$ into hypercubes of side length $p$ or $q$ up to symmetries.
\end{theorem}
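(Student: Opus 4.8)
The plan is to treat existence and uniqueness separately: first exhibit the tiling by an explicit lattice construction, and then show that the unilateral and lattice hypotheses force exactly this configuration up to a symmetry.

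For existence I would build everything from a single lattice. Let $\Lambda\subset\mathbb{R}^n$ be generated by the rows of the circulant matrix with first row $(p,q,0,\dots,0)$ when $n$ is odd, and of the corresponding skew-circulant (negacyclic) matrix when $n$ is even. In both cases an eigenvalue computation gives $\det\Lambda=\prod_k(p+q\zeta_k)=p^n+q^n$, where $\zeta_k$ ranges over the $n$-th roots of unity for $n$ odd and over the roots of $x^n=-1$ for $n$ even. I then place the large cubes at $[0,p]^n+\Lambda$ and the small cubes at $[0,q]^n+t+\Lambda$ for the single offset $t=(0,\dots,0,p-q,p)$. The core of the existence argument is to check that $[0,p]^n\cup(t+[0,q]^n)$ is a fundamental domain for $\Lambda$: one verifies that the two cubes have disjoint interiors and that their $\Lambda$-translates cover $\mathbb{R}^n$, which reduces, facet by facet, to showing that the notch left around each lattice point by the large cubes is exactly one translated small cube (the dimension-three computation generalizes cyclically in the coordinates). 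Unilaterality is then immediate: two equal cubes share a full facet only if their centers differ by $p\,e_i$ or $q\,e_i$, and no such vector lies in $\Lambda$ because every generator mixes two coordinates.

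For uniqueness let $\mathcal{T}$ be any unilateral lattice tiling into $p$- and $q$-cubes, with large-cube centers forming a coset of a lattice $\Lambda_p$ and small-cube centers a coset of $\Lambda_q$. First I would note that both sizes must occur, since a one-size lattice tiling by cubes always contains a pair sharing a full facet and is therefore never unilateral. The heart of the proof is a \emph{corona lemma}: around a fixed large cube $B$ I analyze, facet by facet, how the neighbouring tiles sit. Unilaterality forbids any large cube flush against a facet $F$ of $B$; a counting and planarity argument then rules out $F$ being met by a flat layer of small cubes (such a layer would either force equal cubes to share facets, or force all its cubes into one $\Lambda_q$-hyperplane and again produce a shared facet), so $F$ must be met by exactly one large cube offset by $q$ in a transverse direction, together with a staircase of small cubes. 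Performing this in all $2n$ facet directions pins down the nearest large neighbours of $B$, and the lattice hypothesis propagates this single corona everywhere, forcing $\Lambda_p$ to coincide with the circulant lattice $\Lambda$ up to an orthogonal symmetry permuting and reflecting coordinates; the small cubes are then the unique filler.

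The main obstacle is the corona lemma in general dimension. In the plane the case analysis at a corner is short, but for $n\geq3$ one must simultaneously control how $q$-cubes can abut a $(n-1)$-dimensional $p\times\dots\times p$ facet without violating unilaterality, and then reconcile the forced offsets along all $n$ axes so that they close up into a single lattice. I expect the cleanest route is to exploit periodicity first: after arguing that $\Lambda_p$ and $\Lambda_q$ are commensurable, pass to the torus $\mathbb{R}^n/(\Lambda_p\cap\Lambda_q)$ and run a volume and counting argument showing there is exactly one small cube per large cube, and only then fix the local geometry. Establishing this commensurability together with the ``one small per large'' count is the technical linchpin on which the remainder of the uniqueness argument rests.
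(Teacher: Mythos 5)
Your existence half is essentially the paper's construction in disguise: after flipping the signs of alternate coordinates, your circulant (for $n$ odd) and negacyclic (for $n$ even) lattices coincide with the lattice $A\mathbb{Z}^n$ of the paper, whose columns are $qe_i-pe_{i+1}$ and $pe_1+qe_n$, and your claim that one large cube plus one translated small cube is a fundamental domain is precisely the paper's Lemma \ref{lem1}. Two things need repair, though. First, the fundamental-domain verification is the actual content of the existence proof, and ``the dimension-three computation generalizes cyclically'' is not a proof; the paper does this by explicit reduction claims (adding suitable lattice vectors to land in $[0,q)^{n-1}\times[0,p+q)$) followed by a Lebesgue-measure count against $\det A=p^n+q^n$. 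Second, your unilaterality argument is invalid as stated: ``no such vector lies in $\Lambda$ because every generator mixes two coordinates'' proves nothing, since lattice vectors are integer combinations of generators and such combinations can certainly be supported on a single coordinate --- for instance $(p^n+q^n)e_i\in\Lambda$. One must show specifically that $pe_i,qe_i\notin\Lambda$, which the paper deduces from the fundamental-domain property: such a vector would connect two interior points of the domain, which would then be distinct representatives of the same class.

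The uniqueness half has a genuine gap, which you yourself flag. Everything is delegated to a ``corona lemma'' whose statement is vague --- it is not explained why the trichotomy (flush large cube, flat layer of small cubes, one offset large cube plus a staircase) is exhaustive in dimension $n$ --- and whose proof is not given; you then say that commensurability of $\Lambda_p$ and $\Lambda_q$ together with the ``one small per large'' count is ``the technical linchpin on which the remainder of the uniqueness argument rests.'' That linchpin is exactly what a proof must supply, and it is where the paper does real work, by a quite different route centred on the \emph{small} cube rather than the large one: using the lattice hypothesis it shows first that two small cubes can never touch (a touching pair yields a lattice vector joining two interior points of a big cube), then that a small and a big cube touching properly must share a corner, then an ``opposite neighbour'' lemma, and finally --- the key step --- that among the $2n$ big cubes touching a fixed small cube there is a canonical map $\nu$ (each big neighbour has a unique ``aligned'' successor) whose directed graph is a single cycle through all $2n$ neighbours. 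The differences of consecutive centres along this cycle assemble into a matrix $C$ satisfying $SC=A$ for a signed permutation matrix $S$, and $\det C=\det A=\det B=p^n+q^n$ then forces $C\mathbb{Z}^n=B\mathbb{Z}^n$, i.e.\ the given tiling is the constructed one up to symmetry. Nothing in your outline substitutes for this cycle-and-determinant argument, so as it stands your uniqueness direction is a plan rather than a proof.
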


\begin{remark}
Since lattice tilings are equitransitive, in regards to existence Theorem \ref{main} is stronger than Conjecture \ref{conj} and in regards to uniqueness it is the other way around.
\end{remark}

\begin{notation}
We write $[a,b]$ for $\{a,a+1,a+2,...,b-1,b\}\subseteq\mathbb{Z}$ and $e_i$ for the $i$-th standard unit vector in $\mathbb{R}^n$.
\end{notation}

\section{Existence}

We split Theorem \ref{main} into two parts: The first is Theorem \ref{thmint1} and the other is dealt with in section \ref{secunique}.

\begin{theorem}
\label{thmint1}
Let $p,q\in\mathbb{R}_+$ with $q<p$ and $n\in\mathbb{N}_{\geq2}$. There exists a lattice tiling of $\mathbb{R}^n$ into hypercubes of side length $p$ or $q$.
\end{theorem}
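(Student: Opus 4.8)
The plan is to exhibit an explicit lattice $\Lambda$ of covolume $p^n+q^n$ together with one large and one small cube whose $\Lambda$-translates tile $\mathbb{R}^n$. Let $\Lambda\subset\mathbb{R}^n$ be generated by the vectors $v_i=p\,e_i+q\,e_{i+1}$ for $1\le i\le n-1$ and $v_n=p\,e_n+(-1)^{n+1}q\,e_1$. The matrix with rows $v_1,\dots,v_n$ is a skew-circulant, and a one-column cofactor expansion gives determinant $p^n+q^n$; hence $\operatorname{covol}(\Lambda)=p^n+q^n$ (for $n=2$ these generators are exactly the Pythagorean pair $(p,q)$, $(-q,p)$). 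As prototiles I take the large cube $C_p=[0,p)^n$ and the small cube $C_q=[p,p+q)\times[0,q)^{n-1}$, placed so that $C_q$ is flush against the face $\{x_1=p\}$ of $C_p$; the exact offset of $C_q$ will be fixed by the requirement that it plug the hole left between the large cubes. I claim that the $\Lambda$-translates of $C_p$ and $C_q$ tile $\mathbb{R}^n$.

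Because $\operatorname{vol}(C_p)+\operatorname{vol}(C_q)=p^n+q^n=\operatorname{covol}(\Lambda)$, it suffices to prove that the translates \emph{pack}, i.e.\ that $C:=C_p\sqcup C_q$ injects, up to a null set, into the torus $T=\mathbb{R}^n/\Lambda$; packing together with the matching of volumes then forces the covering, and hence the tiling. Concretely I must check, for every nonzero $\lambda\in\Lambda$, that $C\cap(C+\lambda)$ is a null set, and this splits into three incidences: large--large ($C_p\cap(C_p+\lambda)$), small--small ($C_q\cap(C_q+\lambda)$), and large--small ($C_p\cap(C_q+\lambda)$, whose reverse is recovered by $\lambda\mapsto-\lambda$).

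The heart of the argument is the large--large case, which amounts to the statement that $\Lambda$ contains no nonzero vector in the \emph{open} cube $(-p,p)^n$; equivalently, the $\ell^\infty$-minimum of $\Lambda$ equals $p$ and is attained only at the boundary points $\pm v_i$. Writing $\lambda=\sum_i a_ie_i$ with $a\in\mathbb{Z}^n$, one has $\lambda_j=a_jp+a_{j-1}q$ cyclically (with the sign twist $\lambda_1=a_1p+(-1)^{n+1}a_nq$). Assuming $|\lambda_j|<p$ for all $j$, I would run a sign-and-magnitude argument around the cycle: using $0<q<p$, locate an index where $|a_j|$ is maximal and show that the two adjacent constraints pin down the signs of the neighbouring coefficients, so that cyclic closure becomes impossible unless $a=0$. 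The small--small case is then immediate from $(-q,q)^n\subset(-p,p)^n$, and the large--small case reduces to showing that $\Lambda$ misses the shifted open box $\{\lambda_1\in(-p-q,0),\ \lambda_i\in(-q,p)\ (i\ge2)\}$, which is handled by the same coordinatewise inequalities and simultaneously fixes the admissible offset of $C_q$. I expect this $\ell^\infty$-emptiness of the open cube to be the main obstacle: the naive estimates (Minkowski's theorem, or treating the recursion $a_j=\lambda_j/p-(q/p)a_{j-1}$ as an affine contraction) only yield $\|a\|_\infty\lesssim(1-q/p)^{-1}$, which exceeds $1$ as $q\to p$, so the integrality of $a$ together with the cyclic sign pattern must be used in an essential way.
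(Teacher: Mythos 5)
Your lattice is essentially the right one --- up to reversing the coordinate order and flipping the sign of every other coordinate it is exactly the lattice $A\mathbb{Z}^n$ used in the paper, and your determinant claim is correct --- and the reduction ``packing of one large and one small cube plus matching volumes implies tiling'' is sound in principle (the paper argues in the reverse order: it first reduces every point of $\mathbb{R}^n$ into the union of the two cubes, then uses the volume count $\det A=p^n+q^n$). But there are two genuine gaps. The first: your central claim, the emptiness $\Lambda\cap(-p,p)^n=\{0\}$, is only announced (``I would run a sign-and-magnitude argument''), and you yourself flag it as the expected main obstacle; as submitted this is a plan, not a proof. For what it is worth, the sketch can be completed: from $|pa_j+qa_{j-1}|<p$ with $0<q<p$ and $a_j\in\mathbb{Z}$ one gets that $a_{j-1}=0$ forces $a_j=0$, while $a_{j-1}\neq0$ forces $|a_j|\le|a_{j-1}|$ and $a_ja_{j-1}\le0$; hence either $a=0$ or all $a_j$ are nonzero with strictly alternating signs, and then the wrap-around relation $\lambda_1=pa_1+(-1)^{n+1}qa_n$ gives $\operatorname{sign}(a_1)=(-1)^n\operatorname{sign}(a_n)=(-1)^n(-1)^{n-1}\operatorname{sign}(a_1)=-\operatorname{sign}(a_1)$, a contradiction. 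So integrality enters only through $|a_j|\ge 1$, exactly as you suspected.

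The second gap is fatal to the construction as you stated it: the placement of the small cube is wrong for every $n\ge3$ (it is correct only for $n=2$). For $n\ge 3$ consider $\lambda=v_2-v_1=(-p,\,p-q,\,q,\,0,\dots,0)$. This nonzero lattice vector lies in your large--small box $(-p-q,0)\times(-q,p)^{n-1}$, and indeed $C_q+\lambda=[0,q)\times[p-q,p)\times[q,2q)\times[0,q)^{n-3}$ overlaps $C_p=[0,p)^n$ in a set of positive measure. So the large--small emptiness you propose to verify is false, and the translates of $C_p$ and $C_q$ under your $\Lambda$ do not form a packing. The reason is the sign twist in your generators: since your lattice differs from the paper's by flipping alternate coordinates, the hole left by the large cubes next to the facet $\{x_1=p\}$ of $C_p$ is not at the corner $(p,0,\dots,0)$ but at $[p,p+q)\times I_2\times\cdots\times I_n$ with $I_j$ alternating between $[0,q)$ and $[p-q,p)$ (for $n=3$ it is $[p,p+q)\times[0,q)\times[p-q,p)$). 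Your hedge that the offset of $C_q$ ``will be fixed by the requirement that it plug the hole'' does not repair this, because the concrete box you then propose to show $\Lambda$ misses is tied to the incorrect corner-aligned offset; you must either work with the alternating offset (and redo the large--small box accordingly), or, as the paper does, change the signs of the generators to $qe_i-pe_{i+1}$ and $pe_1+qe_n$, for which the corner-aligned placement is the correct one.
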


\begin{remark}
After a preprint of this article appeared on arXiv, the author was informed by Mihalis Kolountzakis that Theorem \ref{thmint1} follows from \autocite[Theorem 8]{MR1605065}.
\end{remark}

Let $A$ be the following $n\times n$ matrix:

\vspace{0.3cm}

\begin{tikzpicture}[yscale=-1,scale=0.5]

\node  at (1,1) {$q$};
\node  at (7,7) {$q$};
\node  at (1,2) {$-p$};
\node  at (6,7) {$-p$};
\node  at (7,1) {$p$};

\node  at (2.33,5.66) {$0$};
\node  at (5.33,3) {$0$};

\draw[-] (1.5,1.5) -- (6.5,6.5);
\draw[-] (1.5,2.5) -- (5.5,6.5);

\draw[-,dashed] (1,3) -- (1,7);
\draw[-,dashed] (1,7) -- (5,7);
\draw[-,dashed] (5,7) -- (1,3);

\draw[-,dashed] (2,1) -- (6,1);
\draw[-,dashed] (6,1) -- (6,2);
\draw[-,dashed] (6,2) -- (7,2);
\draw[-,dashed] (7,2) -- (7,6);
\draw[-,dashed] (7,6) -- (2,1);

\draw[-,thick] (0,0.5) to[out=100, in=-100] (0,7.5);
\draw[-,thick] (8,0.5) to[out=80, in=-80] (8,7.5);

\end{tikzpicture}

\vspace{0.3cm}

Theorem \ref{thmint1} follows directly from the following Lemma.

\begin{lemma}
\label{lem1}
$[0,q]^n\cup [0,p]^{n-1}\times [q,q+p]$ is a system of representatives of $\mathbb{R}^n/A\mathbb{Z}^n$ up to null sets.
\end{lemma}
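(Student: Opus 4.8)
The plan is to read the lattice $\Lambda=A\mathbb{Z}^n$ off the columns of $A$, namely the generators $v_i=q e_i-p e_{i+1}$ for $1\le i\le n-1$ and $v_n=p e_1+q e_n$, and to verify that $S:=[0,q]^n\cup\big([0,p]^{n-1}\times[q,q+p]\big)$ is a fundamental domain for $\Lambda$ up to null sets. First I would record the bookkeeping identity $\det A=p^n+q^n$: expanding along the first row contributes $q\cdot q^{n-1}$ from the lower-triangular minor $M_{11}$, together with the corner term $(-1)^{1+n}p\cdot(-p)^{n-1}=p^n$ from $a_{1n}=p$, so $\operatorname{covol}(\Lambda)=p^n+q^n$. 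Since $\operatorname{vol}(S)=q^n+p^n$ equals this covolume, it suffices to prove that the translates $\{S+\lambda:\lambda\in\Lambda\}$ cover $\mathbb{R}^n$ up to a null set; the matching volume then upgrades a covering to a genuine system of representatives, because the covering-multiplicity function is $\ge 1$ almost everywhere and integrates to $\operatorname{vol}(S)=\operatorname{covol}(\Lambda)$ over one fundamental domain, forcing it to equal $1$ a.e.

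To produce, for a generic $x$, a vector $\lambda$ with $x-\lambda\in S$, I would use the generators as elementary ``carry'' moves: subtracting $v_i$ replaces $(x_i,x_{i+1})$ by $(x_i-q,\,x_{i+1}+p)$, while subtracting $v_n$ replaces $(x_1,x_n)$ by $(x_1-p,\,x_n-q)$. Sweeping $i=1,\dots,n-1$ and removing $\lfloor x_i/q\rfloor\,v_i$ drives $x_1,\dots,x_{n-1}$ into $[0,q)$ without disturbing the already-normalized coordinates, after which $v_n$ is used to bring $x_n$ into range; the resulting value of $x_n$ dictates whether the representative sits in the small cube $C_q=[0,q]^n$ (when $x_n\in[0,q)$) or in the large cube $C_p=[0,p]^{n-1}\times[q,q+p]$ (when $x_n\in[q,q+p)$), relaxing the first $n-1$ coordinates to $[0,p)$ in the latter case. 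The clean point controlling this is the cyclic structure of $A$: if $a\in\mathbb{Z}^n$ satisfies $\|Aa\|_\infty<q$ (the condition for two points of $C_q$ to be $\Lambda$-congruent), then the relations $a_j=\tfrac{p}{q}a_{j-1}+c_j/q$ with $|c_j|<q$, together with the \emph{sign-flipped} wrap-around $a_1=-\tfrac{p}{q}a_n+c_1/q$ coming from $v_n$, propagate a single sign around the cycle (assuming $a_n>0$ forces $a_1\le 0$ and hence all $a_j\le 0$, a contradiction) and force $a=0$. This is exactly the reflection of $\det A=p^n+q^n>0$, and it shows the small cubes never overlap.

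The main obstacle is the interaction between the two scales $q$ and $p$ --- the genuinely Pythagorean part of the construction. Proving that the reduction terminates and lands in exactly one of the two cubes requires reconciling the carries at the shared facet $x_n=q$, where $C_q$ and $C_p$ meet: the first $n-1$ coordinates must be simultaneously admissible for whichever cube the last coordinate selects, and the monovariant that halts the cascade (for instance $\Phi(x)=\sum_i (q/p)^i x_i$, which is invariant under $v_1,\dots,v_{n-1}$ and strictly decreases under $v_n$) must be combined with the box constraints to exclude infinite carries. Equivalently, in the packing formulation the delicate cases are the large-cube self-overlaps and the small/large cross-overlaps: unlike the small-cube case these do \emph{not} succumb to a crude $\ell^\infty$ estimate --- for $n\ge 3$ and $p$ close to $q$ the naive bound on $\|A^{-1}c\|_\infty$ exceeds $1$ --- so they must be settled using the integrality of the $a_j$ and the explicit staircase of the tiling rather than norm comparisons alone. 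I therefore expect the bulk of the work, and the only real subtlety, to lie in this scale-matching step, with everything else reducing to the determinant computation and the volume argument above.
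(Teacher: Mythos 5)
Your overall skeleton coincides with the paper's: compute $\det A=p^n+q^n$ (your Laplace expansion is correct), normalize the first $n-1$ coordinates into $[0,q)$ by sweeping with $v_1,\dots,v_{n-1}$ (this is the paper's Claim~1, and it works because $v_i$ only disturbs coordinates $i$ and $i+1$), and then upgrade a covering of $\mathbb{R}^n$ by translates of $S$ to an almost-everywhere exact system of representatives by comparing $\operatorname{vol}(S)$ with the covolume. Your multiplicity-function phrasing of that last step is equivalent to the paper's decomposition of the parallelepiped $P=\biguplus_i P_i$ into pieces $P_i$ translated into $C$, and is fine. The side argument that no nonzero lattice vector has $\ell^\infty$-norm below $q$ (sign propagation around the cycle) is also correct, though it is not needed once one follows the covering-plus-volume route.

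The genuine gap is the covering step itself, which is the actual content of the lemma, and you have not closed it. ``After which $v_n$ is used to bring $x_n$ into range'' fails as stated: subtracting $v_n=pe_1+qe_n$ moves $x_1$ by $-p$, so every adjustment of $x_n$ destroys the normalization of $x_1$ and triggers a cascade of re-normalizations, each of which in turn moves $x_n$ again by multiples of $p$. Your monovariant $\Phi(x)=\sum_i(q/p)^i x_i$ only rules out infinitely many $v_n$-moves of one sign along a bounded orbit; it does not show that the cascade can be steered to terminate inside $[0,q)^{n-1}\times[0,p+q)$, and you explicitly defer exactly this point (``the bulk of the work'') to an unspecified argument. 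The paper settles it with a concrete device you would need to reinvent: the auxiliary vectors $b_1:=a_n$ and $b_k:=b_{k-1}-a_{k-1}$, whose coordinate patterns are $(p-q,\dots,p-q,p,0,\dots,0,q)^T$ (and $(p-q,\dots,p-q,p+q)^T$ for $k=n$). Subtracting, respectively adding, $b_l$ --- where $l$ is chosen as the \emph{first} index at which $x_l$ crosses an explicit threshold --- changes $x_n$ by exactly $\mp q$ (by $\mp(p+q)$ when $l=n$) while keeping the first $n-1$ coordinates in $[0,q)$; these are the paper's Claims~2 and~3, which are iterated and followed by one final correction to land in $C$. Until you supply such an explicit reduction (or a genuinely different termination argument), the covering of $\mathbb{R}^n$ by translates of $S$, and hence the lemma, remains unproven; everything else in your proposal is correct but peripheral to this core difficulty.
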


\begin{proof}

We split the proof of Lemma \ref{lem1} into four claims.

\begin{claim}
Every element in $\mathbb{R}^n$ has a representative in $[0,q)^{n-1}\times \mathbb{R}$.
\end{claim}

\begin{proofofclaim}
We can inductively add multiples of the $k$-th colomn of $A$ to find a representative of any element in $[0,q)^{k}\times \mathbb{R}^{n-k}$ up to $k=n-1$.

\end{proofofclaim}

\begin{claim}
Every element $x=(x_1,x_2,...,x_n)^T \in [0,q)^{n-1}\times \mathbb{R}_{\geq 0}$ with $x_n\geq p+q$ has a representative in $[0,q)^{n-1}\times [0,x_n-q]$.
\end{claim}

\begin{proofofclaim}
Consider the following matrix $A_1$: 

 \vspace{0.3cm}

\begin{tikzpicture}[yscale=-1,scale=0.5]

\node  at (1,1) {$p$};
\node  at (7.3,6.95) {$p{+}q$};
\node  at (6,6) {$p$};
\node  at (1,7) {$q$};
\node  at (6,7) {$q$};

\node  at (2.33,4.66) {$0$};
\node  at (5.53,2.66) {$p{-}q$};

\draw[-] (1.5,1.5) -- (5.5,5.5);
\draw[-] (1.5,7) -- (5.5,7);

\draw[-,dashed] (1,2) -- (1,6);
\draw[-,dashed] (1,6) -- (5,6);
\draw[-,dashed] (5,6) -- (1,2);

\draw[-,dashed] (2,1) -- (7.3,1);
\draw[-,dashed] (7.3,1) -- (7.3,6.3);
\draw[-,dashed] (7.3,6.3) -- (2,1);

\draw[-,thick] (0,0.5) to[out=100, in=-100] (0,7.5);
\draw[-,thick] (8,0.5) to[out=80, in=-80] (8,7.5);

\end{tikzpicture}
 \vspace{0.3cm} 
    
   Let $a_1,..,a_n$ be the columns of $A$ and $b_1,...,b_n$ the columns of $A_1$. We obtain $A_1$ in the following way:
   \begin{itemize}
   \item $b_1=a_n$
   \item $b_k=b_{k-1}-a_{k-1}$, for $k\in[2,n]$
   \end{itemize}
   
   $$b_k=b_{k-1}-a_{k-1}=\begin{pmatrix}  p-q\\ 
   \vdots \\ 
    p-q\\
    p \\
    0\\
    0\\
    \vdots \\
    0 \\
    q
   \end{pmatrix}-\begin{pmatrix}  0\\ 
   \vdots \\ 
    0\\
    q \\
    -p\\
    0 \\
    \vdots \\
    0 \\
    0
   \end{pmatrix}=\begin{pmatrix}  p-q\\ 
   \vdots \\ 
    p-q\\
    p-q \\
    p\\
    0 \\
    \vdots \\
    0 \\
    q
   \end{pmatrix}\begin{array}{l}
  \\[-45mm] \rdelim\}{3}{120mm}[$k-2$] \\
\end{array}[-1ex] $$

  Now let $l\in[1,n]$ such that $x_i<p$ for all $i<l$ and $x_l\geq p$, then $x-b_l$ is a representative of $x$ in $[0,q)^{n-1}\times [0,x_n-q]$.

\end{proofofclaim}

\begin{claim}
Every element $x=(x_1,x_2,...,x_n)^T \in [0,q)^{n-1}\times \mathbb{R}_{<0}$ has a representative in $[0,q)^{n-1}\times [x_n+q,p+q)$.
\end{claim}

\begin{proofofclaim}
Let $b_1,...b_n$ be as before and let $l\in[1,n]$ be such that $x_i\geq q-p$ for all $i<l$ and $x_l<q-p$. Then $x+b_l$ is a representative of $x$ in $[0,q)^{n-1}\times [x_n+q,p+q)$.

\end{proofofclaim}

By iteratively applying the last two claims we get a representative in $[0,q)^{n-1}\times [0,p+q)$ for every element.

Now let $x\in([0,q)^{n-1}\times [0,p+q))\setminus ([0,q)^n\cup [0,p)^{n-1}\times [q,q+p))$, and again let $l\in[1,n]$ such that $x_i<p$ for all $i<l$ and $x_l\geq p$. From the definition of $x$ it follows that $l<n$ and $x-b_l$ is a representative of $x$ in $[0,q)^{n}$ because $x_n>q$.

\begin{claim}
The following holds: $\det (A)=p^n+q^n$.
\end{claim}

\begin{proofofclaim}
We calculate the determinant with the Laplace expansion along the first row.

$$\det(A)=q\begin{vmatrix}
q &  &  & \\
-p & \ddots &  &  \\
 & \ddots & \ddots &  \\
 &  & -p & q
\end{vmatrix}
+(-1)^{n-1} p\begin{vmatrix}
-p & q &  & \\
 & \ddots & \ddots &  \\
 &  & \ddots & q \\
 &  &  & -p
\end{vmatrix}=q^n+p^n$$

\end{proofofclaim}

We now prove Lemma \ref{lem1}. The parallelepiped $P$ spanned by the columns of $A$ is a system of representatives (up to null sets) of $\mathbb{R}^n/A\mathbb{Z}^n$ with Lebesgue measure $\det(A)=p^n+q^n$. Let $P=\biguplus_{i\in\mathbb{N}} P_i $ be a decomposition of $P$ into sets of points that share the same translation into $C:=[0,q)^n\cup [0,p)^{n-1}\times [q,q+p)$ by the previous reductions and let $Q_i$ be the corresponding translated sets. Every set $P_i$ and therefore also $Q_i$ is measurable since it is constructed by intersections of halfspaces. Since $P$ is a system of representatives (up to null sets), the sets $Q_i$ have pairwise intersection of measure $0$ and  therefore $$p^n+q^n=\lambda(P)=\sum_{i\in\mathbb{N}}\lambda(P_i)=\sum_{i\in\mathbb{N}}\lambda(Q_i)\leq \lambda(C)=p^n+q^n.$$ Consequently, $[0,q]^n\cup [0,p]^{n-1}\times [q,q+p]=C=\bigcup_{i\in\mathbb{N}} Q_i $ up to null sets, which concludes the proof.
\end{proof}

\begin{remark}
Two different points in the interior of $[0,q]^n\cup [0,p]^{n-1}\times [q,q+p]$ cannot be representatives of the same element in $\mathbb{R}^n/A\mathbb{Z}^n$ as this would imply that there were neighbourhoods of the two points corresponding to the same set in  $\mathbb{R}^n/A\mathbb{Z}^n$ of non-zero Lebesgue measure.
\end{remark}

To show that the tiling is unilateral the followimg Lemma is sufficient.

\begin{lemma}
Let $i\in [1,n]$ and let $e_i$ be the $i$-th standard unit vector, then $pe_i\not\in A\mathbb{Z}^n$ and $qe_i\not\in A\mathbb{Z}^n$.
\end{lemma}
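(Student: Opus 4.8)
The plan is to solve the linear systems $Av=pe_i$ and $Av=qe_i$ over $\mathbb{R}$ and to show that their unique real solutions are never integral. Since $\det(A)=p^n+q^n\neq 0$ by the last claim, each system has exactly one solution $v\in\mathbb{R}^n$, and an integer solution can exist only if this very $v$ happens to lie in $\mathbb{Z}^n$. It therefore suffices to produce one coordinate of $v$ that is not an integer, and I would target the first coordinate $v_1$, proving $0<|v_1|<1$ in every case.

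First I would read off the rows of $A$. Writing $r:=p/q>1$, the system $Av=pe_i$ reads $q v_1+p v_n=p\,\delta_{1i}$ from the first row (this is where the corner entry $p$ in position $(1,n)$ enters) and $q v_j-p v_{j-1}=p\,\delta_{ji}$ for $j\in[2,n]$, i.e. the recurrence $v_j=r v_{j-1}+r\,\delta_{ji}$. Telescoping this recurrence writes every $v_j$, in particular $v_n$, as an explicit function of $v_1$, and substituting into the wrap-around equation from the first row gives a single linear equation for $v_1$. Solving it yields $v_1=\dfrac{p q^{n-1}}{p^n+q^n}$ when $i=1$ and $v_1=-\dfrac{p^{\,n-i+2}q^{\,i-2}}{p^n+q^n}$ when $i\geq 2$. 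The analogous computation for $Av=qe_i$ uses the recurrence $v_j=r v_{j-1}+\delta_{ji}$ and produces $v_1$ of the same shape, namely $v_1=\dfrac{q^{n}}{p^n+q^n}$ for $i=1$ and $v_1=-\dfrac{p^{\,n-i+1}q^{\,i-1}}{p^n+q^n}$ for $i\geq 2$.

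It then remains to bound these quantities, which is routine. In every case the numerator is a monomial $p^{a}q^{b}$ with $a,b\geq 0$ and $a+b=n$, and it is strictly positive. Using $0<q<p$ one has $p^{a}q^{b}=p^{n}(q/p)^{b}\leq p^{n}<p^n+q^n$ when $b\geq 1$, and $p^{a}q^{b}=p^{a}=p^{n}<p^n+q^n$ when $b=0$; in all cases $0<p^{a}q^{b}<p^n+q^n$. Hence $0<|v_1|<1$, so $v_1\notin\mathbb{Z}$, and therefore neither $pe_i$ nor $qe_i$ lies in $A\mathbb{Z}^n$.

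The only genuinely delicate point is that $p$ and $q$ are arbitrary positive reals, so no divisibility or modular argument is available; the whole force of the proof lies in solving the system exactly and then exploiting the strict inequality $q<p$ to trap $v_1$ strictly between $-1$ and $1$. Some care is needed with the wrap-around term coming from the corner entry of $A$, since it is what couples $v_1$ and $v_n$ in the boundary equation and ultimately produces the denominator $p^n+q^n$; one should also check the endpoint exponents ($b=0$ for $pe_2$ and $a=0$ for $qe_1$) separately, as I did above.
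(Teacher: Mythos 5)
Your proof is correct, but it takes a genuinely different route from the paper's. Your computation checks out: with $r=p/q$, the rows of $A$ give the wrap-around equation $qv_1+pv_n=c\,\delta_{1i}$ and the recurrence $v_j=rv_{j-1}+(c/q)\,\delta_{ji}$ for $j\in[2,n]$, and telescoping plus substitution yields exactly your formulas for $v_1$ (for instance, for $n=2$ they give $v_1=-p^2/(p^2+q^2)$ for $pe_2$ and $v_1=-pq/(p^2+q^2)$ for $qe_2$, as direct inversion of the $2\times 2$ matrix confirms). Since every numerator is a monomial $p^aq^b$ with $a+b=n$, the bound $0<p^aq^b\leq p^n<p^n+q^n$ gives $0<|v_1|<1$, and invertibility of $A$ (via $\det A=p^n+q^n\neq 0$) makes this conclusive. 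The paper argues quite differently and more softly: by Lemma \ref{lem1} and the remark following its proof, no nonzero vector of $A\mathbb{Z}^n$ can connect two interior points of the fundamental domain $[0,q]^n\cup[0,p]^{n-1}\times[q,q+p]$; this disposes of one family of vectors outright, and the column identity $qe_i-a_i=pe_{i+1}$ transfers the conclusion to the other family. The paper's proof is much shorter but depends on Lemma \ref{lem1}, and as literally written (``two points in the interior of $[0,q]^n$ with difference $pe_i$'') it presumes $p<q$, the convention opposite to the stated hypothesis $q<p$, so it needs a small relabelling to go through. Your argument is longer but self-contained -- it uses only the determinant computation, not the fundamental domain -- it produces the exact coordinates of $A^{-1}(pe_i)$ and $A^{-1}(qe_i)$ (consistent, as a check, with the paper's later Cramer's-rule computation $\det(A_i)=q^{n-1}$ in the periodicity theorem), and it is indifferent to which of $p,q$ is larger.
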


\begin{proof}
There are two points in the interior of $[0,q]^n$ with difference $pe_i$ and therefore they are both different representatives of  $\mathbb{R}^n/A\mathbb{Z}^n$, so $pe_i\not\in A\mathbb{Z}^n$. Analogously, there are two points in the interior of $[0,q)^n\cup [0,p)^{n-1}\times [q,q+p)$ with difference $qe_n\not\in A\mathbb{Z}^n$.
Suppose $qe_i\in A\mathbb{Z}^n$  for $i<n$ then also $qe_i-a_i=pe_{i+1}\in A\mathbb{Z}^n$, a contradiction to the previous statement.
\end{proof}

\section{Symmetries and periodicity}

The group of symmetries of the tiling has to keep the cube invariant, so it is a supgroup of the hyperoctahedral group $B_n$ which has order $2^n n!$. Again we work with the matrix formulation of the problem and write the group operations as matrix multiplications from the left. Here the hyperoctahedral group can be written in the following way: $B_n\cong B'_n$ where $$B'_n:=\{S\in\{0,\pm 1\}^{n\times n}|\text{every row and column contains exactly one $\pm 1$ entry}\}$$
and we find the biggest supgroup of $B'_n$ which fixes $A\mathbb{Z}^n$.
In other words $S\in B'_n$ is in the stabilizer of the tiling $\iff (SA)X=A$ has an integer solution $$\iff A^{-1}S^{-1}A\in\mathbb{Z}^{n \times n}.$$

\begin{theorem}
$S:=(s_{i,j})_{i,j\in [1,n]}\in B'_n$ is in the stabilizer of the tiling $\iff$ $S$ is of the following form:
\begin{itemize}
\item $s_{i,j}=s_{i+1,j+1}$ for $i<n,j<n$
\item $s_{i,n}=-s_{i+1,1}$ and $s_{n,j}=-s_{1,j+1}$ for $i<n,j<n$
\item $s_{n,n}=s_{1,1}.$
\end{itemize}

\end{theorem}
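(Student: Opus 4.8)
The plan is to recast the three conditions as a single commutation relation. Let $Z$ be the signed cyclic shift given by $Ze_i=e_{i+1}$ for $i<n$ and $Ze_n=-e_1$, so that $A=qI-pZ$ and $Z^n=-I$. Writing out $SZ=ZS$ entry by entry reproduces exactly $s_{i,j}=s_{i+1,j+1}$, $s_{i,n}=-s_{i+1,1}$, $s_{n,j}=-s_{1,j+1}$ and $s_{n,n}=s_{1,1}$; hence the theorem is equivalent to the assertion that $S\in B_n'$ stabilizes the tiling if and only if $SZ=ZS$. One implication is immediate: if $SZ=ZS$ then $SA=AS$, so $S(A\mathbb{Z}^n)=A(S\mathbb{Z}^n)=A\mathbb{Z}^n$ and $S$ stabilizes the lattice.

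For the converse I would start from the stabilizer condition in the form $SA=AX$ with $X=A^{-1}SA\in\mathbb{Z}^{n\times n}$. Since $SA-AS=A(X-S)$ while $SA-AS=-p(SZ-ZS)$, we get $p(SZ-ZS)=A(S-X)$, and because $A=qI-pZ$ this rearranges to the identity $\tfrac{q}{p}(S-X)=(SZ-ZS)+Z(S-X)$, whose right-hand side is an integer matrix. As the stabilizer inside $B_n'$ and the three target conditions depend only on $Z$, rescaling $(p,q)\mapsto(cp,cq)$ leaves them untouched and only scales $A\mathbb{Z}^n$, so I may split into two cases. If $p/q\notin\mathbb{Q}$, then $\tfrac{q}{p}(S-X)\in\mathbb{Z}^{n\times n}$ with $S-X$ integral forces $S=X$, whence $SZ-ZS=\tfrac1pA(S-X)=0$ and we are done.

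The substantial case is $p/q\in\mathbb{Q}$, where I rescale so that $p,q$ are coprime positive integers and set $N=p^n+q^n=\det A$. Identifying $\mathbb{Z}^n$ with $R=\mathbb{Z}[x]/(x^n+1)$ through $e_i\leftrightarrow x^{i-1}$ (so $Z$ is multiplication by $x$ and $A\mathbb{Z}^n=(q-px)R$), the coprimality $\gcd(p,N)=1$ makes $x\mapsto g:=qp^{-1}\bmod N$ a well-defined ring homomorphism $R/(q-px)R\to\mathbb{Z}/N\mathbb{Z}$ which is an isomorphism by comparing cardinalities, and under it $\bar e_i=g^{i-1}$. Since $S$ stabilizes $A\mathbb{Z}^n$ and $\mathbb{Z}^n$, it descends to an automorphism of the cyclic group $\mathbb{Z}/N\mathbb{Z}$, i.e.\ multiplication by a unit $u$; writing $Se_i=\epsilon_i e_{\sigma(i)}$ with $\epsilon_i\in\{\pm1\}$, this reads $ug^{i-1}=\epsilon_i g^{\sigma(i)-1}$, and eliminating $u$ via $i=1$ yields $\epsilon_i\epsilon_1\,g^{\sigma(i)-\sigma(1)-i+1}\equiv1\pmod N$ for every $i$.

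The crux — and the step I expect to be the main obstacle — is to show that $g$ has order exactly $2n$ in $(\mathbb{Z}/N\mathbb{Z})^\times$. From $g^n\equiv-1\pmod N$ one gets $g^{2n}\equiv1$, while an equality $g^d\equiv-1$ with $0<d<n$ would force $N\mid p^d+q^d$, which is impossible because $0<p^d+q^d<p^n+q^n=N$. Consequently $g^k\equiv\pm1$ only when $n\mid k$, so the relation above forces $\sigma(i)\equiv\sigma(1)+i-1\pmod n$ — that is, $\sigma$ is a cyclic shift — and then pins down each sign through $\epsilon_i\epsilon_1=g^{\sigma(i)-\sigma(1)-i+1}=(-1)^{(\sigma(i)-\sigma(1)-i+1)/n}$. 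A short check confirms that the signed permutations obtained in this way are precisely those commuting with $Z$ (the sign flips exactly at the wrap-around $\sigma(i)=n\mapsto\sigma(i+1)=1$), giving $SZ=ZS$ and completing the argument. The delicate point throughout is the rational case: it is the cyclic structure of $\mathbb{Z}^n/A\mathbb{Z}^n$ together with the order-$2n$ computation that rules out the "partial shift" symmetries a coarser analysis would permit.
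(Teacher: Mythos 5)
Your proof is correct, but it takes a genuinely different route from the paper's. You first repackage the three conditions as the single statement $SZ=ZS$ for the signed cyclic shift $Z$ (so that $A=qI-pZ$ and $Z^n=-I$), identifying the stabilizer as the centralizer of $Z$ in $B_n'$; the paper never introduces $Z$ and works column by column with $A$. For ``$\Leftarrow$'' the two arguments amount to the same computation, yours phrased as commutation, the paper's as the explicit identity $Sa_j=\pm a_k$. The real divergence is in ``$\Rightarrow$'': the paper's argument is geometric and uniform in $p,q$ --- it shows that a vector of the form $qe_k\pm pe_l$ other than $\pm a_i$ cannot lie in $A\mathbb{Z}^n$, because after subtracting a suitable column $a_i$ it would connect two interior points of the cube $[0,q)^n$, which the Remark after Lemma \ref{lem1} forbids; since each $Sa_j$ is of this form and must lie in the lattice, $Sa_j=\pm a_i$ and the circulant structure follows with no case distinction and no arithmetic. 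Your route instead splits on the rationality of $p/q$: the irrational case by an integrality trick, and the rational case through the isomorphism $\mathbb{Z}^n/A\mathbb{Z}^n\cong\mathbb{Z}[x]/(x^n+1,\,q-px)\cong\mathbb{Z}/N\mathbb{Z}$ together with the fact that $g=qp^{-1}$ has order $2n$ modulo $N$. What your approach buys: it exhibits the stabilizer structurally as the cyclic group $\langle Z\rangle=\{\pm Z^k\}$ of order $2n$, and the rational case makes visible the same number-theoretic structure (cyclicity of $\mathbb{Z}^n/A\mathbb{Z}^n$, coprimality with $N$) that the paper exploits separately in its periodicity theorem. What the paper's approach buys: brevity and uniformity, since the geometric obstruction handles all real $p\neq q$ at once, with no appeal to integrality. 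One spot in your write-up to tighten: to conclude that $g^k\equiv\pm1\pmod N$ only when $n\mid k$, ruling out $g^d\equiv-1$ for $0<d<n$ is not by itself enough; you must also exclude $g^k\equiv1$ for $0<k<n$, which follows since $g^k\equiv 1$ would give $N\mid p^k-q^k$ while $0<|p^k-q^k|<N$ (alternatively, note that the set of exponents $k$ with $g^k\equiv\pm1$ is a subgroup of $\mathbb{Z}$ containing $n$, whose positive generator must then be $n$). This is a two-line fix, not a gap.
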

 
 \vspace{0.3cm}

\begin{tikzpicture}[yscale=-1,scale=0.5]

\node  at (1,4) {$\pm 1$};
\node  at (4,7) {$\pm 1$};
\node  at (5,1) {$\mp 1$};
\node  at (7,3) {$\mp 1$};

\node  at (4.166,3.833) {$0$};

\draw[-] (1.5,4.5) -- (3.5,6.5);
\draw[-] (5.5,1.5) -- (6.5,2.5);

\draw[-,dashed] (1,1) -- (1,3);
\draw[-,dashed] (1,3) -- (5,7);
\draw[-,dashed] (5,7) -- (7,7);
\draw[-,dashed] (7,7) -- (7,4);
\draw[-,dashed] (7,4) -- (4,1);
\draw[-,dashed] (4,1) -- (1,1);

\draw[-,dashed] (1,5) -- (1,7);
\draw[-,dashed] (1,7) -- (3,7);
\draw[-,dashed] (3,7) -- (1,5);

\draw[-,dashed] (6,1) -- (7,1);
\draw[-,dashed] (7,1) -- (7,2);
\draw[-,dashed] (7,2) -- (6,1);

\draw[-,thick] (0,0.5) to[out=100, in=-100] (0,7.5);
\draw[-,thick] (8,0.5) to[out=80, in=-80] (8,7.5);

\end{tikzpicture}
 \vspace{0.3cm}
    
    \begin{proof}

"$\Leftarrow$": Let $S$ be as above and $i$ such that $s_{i,1}=\pm 1$. W.l.o.g. $s_{i,1}= 1$.
Now $$Sa_j=\begin{cases}
qe_{i+j+1}-pe_{i+j}=a_{i+j-1} & j\leq n-i \\
qe_{n}+pe_{1}=a_n & j=n-i+1 \\
-qe_{-n+i+j-1}+pe_{-n+i+j}=-a_{-n+i+j-1} & j\geq n-i+2.
\end{cases}$$

"$\Rightarrow$": Let $v:=qe_i\pm pe_j$ such that $v\neq a_i$ and $i\neq j$. Then $$a_i-v=\begin{cases}
-pe_{i+1}\mp pe_j & i\leq n-1 \land j  \neq i+1\\
pe_{1}\mp pe_j & i= n \land j\neq 1\\
-2pe_{j} & i\leq n-1 \land j=i+1\\
2pe_{1} & i=n \land j=1.\\
\end{cases}$$
In the first two cases $a_i-v$ is not in $A\mathbb{Z}^n$ because it is a vector connecting two inner points of $[0,q)^n$. Since $a_j-2pe_j$ is also connecting two inner points of $[0,q)^n$, $a_i-v\not\in A\mathbb{Z}^n$ and therefore $v\not\in A\mathbb{Z}^n$. Since every vector $Sa_j$ is of the form $qe_k\pm pe_l$ or the negation of it for all $ j\in [1,n]$ there exists $i\in [1,n]$ such that $Sa_j=\pm a_i$. Now suppose $s_{i,j}=\pm 1$, then $Sa_j=\pm a_i$ and therefore 
 $$s_{i+1,j+1}=\begin{cases}
\pm 1 & i\leq n-1 \land j\leq n-1\\
\mp 1 & i=n  \land j\leq n-1\\
\mp 1 & i\leq n-1  \land j=n\\
\pm 1 & i=n  \land j=n\\
\end{cases}$$
\end{proof}

On the periodicity of the tiling along the coordinate axis we have the following result:

\begin{theorem}
Let $p,q\in\mathbb{N}$ with $\gcd(p,q)=1$. Then $p^n+q^n=\min\{l\in\mathbb{N} | le_i\in A\mathbb{Z}^n\}$ for all $i\in[1,n]$. In particular, there exists a unilateral lattice tiling of $(\mathbb{Z}/(p^n+q^n)\mathbb{Z})^n$ into hypercubes of side length $p$ or $q$.
\end{theorem}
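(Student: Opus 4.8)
The plan is to identify $\min\{l\in\mathbb{N}\mid le_i\in A\mathbb{Z}^n\}$ with the order of the image $\bar e_i$ of $e_i$ in the finite abelian group $G:=\mathbb{Z}^n/A\mathbb{Z}^n$. Since $|G|=|\det A|=p^n+q^n=:N$ by Claim 4, and since $N\cdot A^{-1}=\operatorname{adj}(A)$ is an integer matrix, we have $Ne_i=A\bigl(\operatorname{adj}(A)e_i\bigr)\in A\mathbb{Z}^n$; hence the minimum divides $N$, and it equals $N$ exactly when $\bar e_i$ has order $N$, i.e.\ when $G$ is cyclic and generated by $\bar e_i$. So the whole first assertion reduces to proving that $G$ is cyclic with each $\bar e_i$ a generator.

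To show this I would read off the defining relations of $G$ from the columns of $A$: $q\bar e_i=p\bar e_{i+1}$ for $i<n$ and $q\bar e_n=-p\bar e_1$. The key arithmetic input is that $\gcd(p,N)=\gcd(q,N)=1$: any prime dividing both $p$ and $N=p^n+q^n$ would divide $q^n$, contradicting $\gcd(p,q)=1$, and symmetrically for $q$. Because the order of every element of $G$ divides $|G|=N$, multiplication by $p$ is then an automorphism of $G$, whose inverse is ordinary multiplication by an integer $p'$ with $pp'\equiv1\pmod N$. The relation $p\bar e_{i+1}=q\bar e_i$ therefore yields $\bar e_{i+1}=p'q\,\bar e_i$, so consecutive generators lie in a common cyclic subgroup; running this around the cycle shows that all $\bar e_i$ generate the same cyclic subgroup, which must be $G$ itself. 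As $p'q$ is a unit modulo $N$, every $\bar e_i$ is then a generator of the cyclic group $G\cong\mathbb{Z}/N\mathbb{Z}$ and hence has order exactly $N$, giving the first assertion for all $i$.

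For the \emph{in particular} part I would descend the tiling of Lemma \ref{lem1} to the discrete torus. Because $Ne_i\in A\mathbb{Z}^n$ for all $i$, the lattice tiling of $\mathbb{R}^n$ with period lattice $A\mathbb{Z}^n$ is invariant under $N\mathbb{Z}^n$, so it induces a tiling of $\mathbb{R}^n/N\mathbb{Z}^n$ by $p$- and $q$-cubes; since $p,q\in\mathbb{N}$ and $A$ is an integer matrix, all cube corners are integer points, and restricting to $\mathbb{Z}^n/N\mathbb{Z}^n=(\mathbb{Z}/N\mathbb{Z})^n$ gives the desired discrete tiling, with $N^{n-1}$ cubes of each size contributing $N^{n-1}(p^n+q^n)=N^n$ points in total. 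Unilaterality is inherited from $\mathbb{R}^n$: two equal cubes sharing a facet on the torus would have to do so already in $\mathbb{R}^n$ up to an $N\mathbb{Z}^n$-translation, but $N=p^n+q^n>p>q$ rules out any wrap-around coincidence, and the $\mathbb{R}^n$ tiling is unilateral by the last Lemma of Section \ref{secunique}'s predecessor.

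I expect the only real content to be the cyclicity of $G$; once the coprimality $\gcd(p,N)=\gcd(q,N)=1$ turns $p$ and $q$ into units modulo $N$, the cyclic relations collapse $G$ onto a single generator almost immediately, and both the divisibility bound $l_i\mid N$ and the torus construction are routine consequences of $\det A=N$ and the integrality of $A$.
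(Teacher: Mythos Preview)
Your argument is correct and takes a genuinely different route from the paper. The paper's proof is a direct Cramer's-rule computation: writing the rational solution of $Ax=e_i$ as $x=k/(p^n+q^n)$ with $k\in\mathbb{Z}^n$, it evaluates the single entry $k_i=\det(a_1,\ldots,a_{i-1},e_i,a_{i+1},\ldots,a_n)=q^{n-1}$ via a Laplace expansion, and then $\gcd(q^{n-1},p^n+q^n)=1$ forces the minimal $l$ to be $p^n+q^n$. You instead work structurally in the quotient group $G=\mathbb{Z}^n/A\mathbb{Z}^n$: the column relations $q\bar e_i=p\bar e_{i+1}$ (and $q\bar e_n=-p\bar e_1$), together with the invertibility of $p$ and $q$ modulo $N$---which is the same arithmetic input the paper uses---collapse all the $\bar e_i$ into a single cyclic subgroup, necessarily all of $G$. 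Your approach yields the additional structural fact $G\cong\mathbb{Z}/N\mathbb{Z}$ and avoids any explicit minor computation; the paper's approach is shorter and only needs one diagonal cofactor rather than the full group presentation. For the ``in particular'' clause you are actually more thorough than the paper, which simply asserts the torus tiling without further justification.
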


\begin{proof}
Let $i\in[1,n]$. From Cramer's rule we know that the solution of $Ax=e_i$ in $\mathbb{Q}^n$ is of the form $x=k/\det(A)=k/(p^n+q^n)$ for a $k=(k_1,...,k_n)^T\in\mathbb{Z}^n$ and therefore $(p^n+q^n)e_i\in A\mathbb{Z}^n$. More specifically, $k_i=\det(A_i)/(p^n+q^n)$, where $A_i=(a_1,...,a_{i-1},e_i,a_{i+1},...,a_n)$. Calculating the determinant with the Laplace expansion along the last column we get $\det(A_i)=q^{n-1}$ and since $\gcd(p^n+q^n,q^{n-1})=1$ the statement follows.

\begin{remark}
For $q=2$ and $p=1$ the tiling gives a lower bound on the maximal number of hypercubes of side length two that can be packed in $(\mathbb{Z}/(2^n+1)\mathbb{Z})^n$, which coincides with the optimal solution (see \cite{baumert1971combinatorial}). Finding such packings is used to determine the Shannon capacity of odd cycles (see \cite{Lovasz},\cite{mathew2017new},\cite{Shannon}).

\end{remark}

\end{proof}

\section{Uniqueness}
\label{secunique}

We now show that every unilateral lattice tiling of $\mathbb{R}^n$ into hypercubes of two sizes is equaivalent to the tiling described in the previous sections. It is known that such a tiling does not exist if we only use cubes of one size (see Haj{\'o}s \cite{hajos1942einfache}) and so we can describe the tiling by means of two cubes of different size, translated by $B\mathbb{Z}^n$ for an $n\times n$ matrix $B$ of full rank.
\begin{lemma}
\label{smallcubes}
Two small cubes cannot touch.
\end{lemma}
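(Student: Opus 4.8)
The plan is to derive the statement from the \emph{large} cubes rather than the small ones, since it is their larger side length $p>q$ that forces the separation. Write the tiling as the $B\mathbb{Z}^n$-orbit of one small cube $C_s$ of side $q$ and one large cube $C_l$ of side $p$. Two axis-parallel cubes of side $q$ have intersecting closures precisely when the infinity-norm distance of their centres is at most $q$; hence two small cubes of the tiling touch if and only if there is a nonzero $\lambda\in B\mathbb{Z}^n$ with $\|\lambda\|_\infty\le q$. So it suffices to prove that every nonzero lattice vector $\lambda\in B\mathbb{Z}^n$ satisfies $\|\lambda\|_\infty>q$.

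First I would record what the small cubes by themselves yield: distinct small cubes $C_s$ and $C_s+\lambda$ have disjoint interiors, so no nonzero $\lambda$ lies in the open cube $(-q,q)^n$, i.e.\ $\|\lambda\|_\infty\ge q$. This is not enough, and the unilateral hypothesis only rules out the purely axial vectors $\pm q\,e_i$ (a full shared facet), while leaving open the oblique partial-facet contacts such as $\lambda=(q,t_2,\dots,t_n)^T$ with $|t_i|<q$. The decisive step is therefore to invoke the large cubes: for any nonzero $\lambda$ the cubes $C_l$ and $C_l+\lambda$ are two \emph{distinct} large cubes of the tiling, so their interiors are disjoint, which for side-$p$ cubes means exactly $\|\lambda\|_\infty\ge p$. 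Since this holds for every nonzero $\lambda\in B\mathbb{Z}^n$, the minimal infinity norm of a nonzero lattice vector is at least $p$.

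Combining the two observations then closes the argument: the centre difference of any two distinct small cubes is a nonzero lattice vector $\lambda$, so $\|\lambda\|_\infty\ge p>q$; in particular some coordinate $i$ satisfies $|\lambda_i|\ge p>q$, the two small cubes are strictly separated in the $x_i$-direction by a gap of at least $p-q>0$, and hence their closures are disjoint. In particular they cannot even meet in a lower-dimensional face. I do not expect a genuine obstacle here; the only points to keep in mind are that \emph{touching} must be read as nonempty intersection of closures (corners included), and that it is the strict inequality $p>q$, accessible only through the large cubes, rather than the non-strict bound coming from the small cubes, that actually does the work.
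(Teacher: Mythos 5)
Your proposal is correct and is essentially the paper's own argument: the paper likewise observes that touching small cubes would yield a nonzero lattice vector whose coordinates are all at most the small side length, and that such a vector connects two interior points of a big cube, contradicting the disjointness of interiors of distinct tiles. The only difference is presentational (you phrase it as a lower bound $\|\lambda\|_\infty\ge p$ on all nonzero lattice vectors, and note the paper's uniqueness section swaps the roles of $p$ and $q$ relative to the main theorem), not mathematical.
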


\begin{proof}
Assume the contrary. Then there exists $x:=(x_1,...,x_n)^T\in B\mathbb{Z}^n$ with $|x_i|\leq p$ for all $i \in [1,n]$. Now $x$ connects two inner points of the big cube, a contradiction.
\end{proof}

\begin{lemma}
A small and a big cube that properly touch share a corner.
\end{lemma}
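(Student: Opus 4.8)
The plan is to argue entirely locally around the small cube, exploiting that all of its neighbours are forced to be big. After translating I may assume the small cube is $S=[0,q]^n$. Since $S$ and the big cube properly touch, their intersection is $(n-1)$-dimensional, so the two cubes are separated in exactly one coordinate while their projections overlap in every other; permuting coordinates and applying the reflection $x_1\mapsto q-x_1$ (which fixes $S$ and carries one unilateral lattice tiling into another of the same kind) I may assume the big cube is $T=w+[0,p]^n$ with $w_1=q$ and $-p<w_j<q$ for $j\ge 2$. In these coordinates the assertion ``$S$ and $T$ share a corner'' is equivalent to the clean statement that $w_j\in\{0,q-p\}$ for every $j\ge 2$, i.e.\ in each transverse direction the big cube is flush with the small cube on one side (the only candidate common value in coordinate $1$ being $x_1=q$). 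This is the identity I would aim to establish.

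The first structural input is Lemma \ref{smallcubes}: no small cube touches $S$, so every cube meeting $S$ is big. In particular the top facet $\{q\}\times[0,q]^{n-1}$ of $S$ is covered by bottom facets of big cubes lying in the hyperplane $x_1=q$. Since distinct big cubes have disjoint interiors, these bottom facets form an interior-disjoint family of $(n-1)$-cubes of side $p$ covering $[0,q]^{n-1}$; equivalently, no nonzero vector of the translation lattice $B\mathbb{Z}^n$ has all coordinates of absolute value less than $p$ (the big cubes pack). The second input is the periodicity of the lattice tiling, which places a small cube on the far side of $T$ in the position dictated by $B\mathbb{Z}^n$. I would use these two constraints in tandem.

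Assume for contradiction that $w_{j_0}\notin\{0,q-p\}$ for some $j_0\ge 2$. Using the reflection $x_{j_0}\mapsto q-x_{j_0}$ I reduce to two cases: (A) $q-p<w_{j_0}<0$, where the transverse span of $S$ lies strictly inside that of $T$, and (B) $0<w_{j_0}<q$, where each cube overhangs the other on one side. In case (B) the sliver of the top facet of $S$ with $x_{j_0}<w_{j_0}$ is not covered by $T$, hence is covered by a second big cube $C$ whose bottom facet also lies in $x_1=q$; the packing condition forces $C$ to be separated from $T$ in coordinate $j_0$, and tracking $C$ against $S$ (whose neighbours are again all big) produces a configuration of big cubes that cannot pack or two small cubes that touch. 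Case (A) is handled similarly by locating the big cube flush beneath the shared facet and confronting it with the small cube prescribed by the lattice on the opposite face of $T$.

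The main obstacle is exactly this last step. The subtlety is that covering the facet of $S$ by itself does \emph{not} force the alignment — in case (A) a single misaligned big cube already covers the entire facet — so the corner-alignment has to be extracted from the interplay between the packing of the big cubes and the lattice-periodic placement of the small cube beyond $T$, rather than from local covering alone. Organising this neighbour bookkeeping so that every misalignment yields either two overlapping big cubes or, via Lemma \ref{smallcubes}, two touching small cubes is the technical heart of the argument.
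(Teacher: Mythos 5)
You have not actually proved the lemma: you set up the right reduction (alignment in every transverse coordinate) and the right first tool (Lemma \ref{smallcubes}, hence every tile meeting the small cube $S$ is big and the big cubes pack), but the step that derives a contradiction from a misaligned coordinate is only asserted, not carried out. ``Tracking $C$ against $S$ \dots produces a configuration of big cubes that cannot pack or two small cubes that touch'' is the statement of what you need, not an argument for it, and you concede this yourself when you call organising that bookkeeping ``the technical heart of the argument.'' Moreover, the one concrete intermediate claim you do make in case (B) is wrong as stated: the big cube $C$ covering the uncovered sliver of the facet of $S$ is \emph{not} forced by the packing condition to be separated from $T$ in the coordinate $j_0$; for $n\ge 3$ it can just as well be separated from $T$ in a different transverse coordinate, so no contradiction follows. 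Finally, your second tool --- lattice periodicity placing a small cube on the far side of $T$ --- is a detour: the paper's proof of this lemma is purely local and uses no periodicity at all (the lattice enters only in the next step, Lemma \ref{opposite}).

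The idea you are missing is a sandwich argument. In each misaligned configuration one exhibits a box $E$ that (i) touches the small cube $S$, and (ii) is wedged between two parallel facets of \emph{big} cubes whose distance is exactly the small side length. By (ii), any tile meeting the interior of $E$ must be a small cube, since a big cube cannot fit into a slab of that width; by (i), such a small cube touches $S$, contradicting Lemma \ref{smallcubes}. The two big cubes bounding the slab come for free from Lemma \ref{smallcubes}: they are the tiles covering thin slivers adjacent to $S$, hence big. Concretely, in the paper's notation (big side $q$, small side $p$, $T=[0,q]^n$, $S=[q,q+p]\times\bigotimes_{j=2}^{n}[x_j,x_j+p]$): if the facet of $S$ lies inside the facet of $T$ but $x_n\notin\{0,q-p\}$, the tiles $T_1,T_2$ beyond the hyperplane $x_1=q$ lying just below and just above $S$ in direction $e_n$ are big cubes whose facets orthogonal to $e_n$ bound a slab of width $p$, and $E$ is the part of that slab beyond $S$; if instead $S$ overhangs $T$ (say $x_n>q-p$), one plays the big cube $T_3$ beyond the overhang against the big cube $T_4$ on the far side of $S$, and whichever way $T_4$ is shifted in coordinate $n$, one of two explicit boxes $E_1$, $E_2$ is pinched between parallel big-cube facets at distance $p$ while touching $S$. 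Without this (or an equivalent) mechanism, your case analysis cannot be closed, and that mechanism is precisely what is absent from your proposal.
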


\begin{proof}
Let $T$ be a big cube and $S$ a small cube that properly touch. W.l.o.g. let $T=[0,q]^n$ and $S$ properly touches $T$ in $\{q\}\times \mathbb{R}^{n-1}$. We write $S=[q,q+p]\times \bigotimes_{j=2}^{n}[x_i,x_i+p]$ where $x_i\in (-p,p+q)$ for all $i\in[2,n]$.

Assume $S$ touches $T$ in a full side but $S$ and $T$ do not share a corner then $x_i\in [0,q-p]$ for all $i\in[2,n]$ and w.l.o.g. $x_n\not\in \{0,p-q\}.$ There have to be big cubes $T_1,T_2$ that touch $T$ in $\{q\}\times {\bigotimes_{j=2}^{n-1}[x_i,x_i+\epsilon]}\times [x_n-\epsilon,x_n]$ and $\{q\}\times {\bigotimes_{j=2}^{n-1}[x_i,x_i+\epsilon]}\times [x_n+q,x_n+q+\epsilon]$ respectively for a $\epsilon>0$. Therefore $T_1= [q,2q]\times \bigotimes_{j=2}^{n-1}[y_i,y_i+q]\times [x_n-p,x_n]$ and $T_2= [q,2q]\times \bigotimes_{j=2}^{n-1}[z_i,z_i+q]\times [x_n+p,x_n+p+q]$ where $y_i,z_i\in (x_i-q,x_i+\epsilon+q)$ for all $i\in[2,n-1]$.
Now $E:=[q+p,2q]\times\bigotimes_{j=2}^{n-1}[x_i,x_i+\epsilon]\times[x_n,x_n+p]$ cannot be filled by any cubes, because it touches a small cube and it touches $T_1$ and $T_2$ in opposite full sides of distance $p$ (see Figure \ref{fig1}).

Now assume that $S$ does not touch $T$ in a full side. W.l.o.g let $x_n>q-p$ and let $x_i\in[0,q)$ for all $i\in[2,n-1]$. Let $T_3$ be the big cube properly touching $S$ in $\{q\}\times\bigotimes_{j=2}^{n-1}[x_i,x_i+\epsilon]\times [q,q+\epsilon]$ and let $T_4$ be the big cube properly touching $S$ in $\{q+p\}\times\bigotimes_{j=2}^{n}[x_i,x_i+\epsilon]$ for a $\epsilon>0$. We write $T_4= [q+p,2q+p]\times \bigotimes_{j=2}^{n}[w_i,w_i+q]$ with $w_i\in [x_i-q+\epsilon,x_i]$ for all $i\in[2,n-1]$. If $w_n<x_n$ then $E_1:={[q,q+p]}\times \bigotimes_{j=2}^{n-1}[x_i,x_i+\epsilon]\times[w_n,x_n]$ cannot be filled by any cubes, because it touches a small cube and it touches $T$ and $T_4$ in opposite full sides of distance $p$. Otherwise $E_2:={[q,q+p]}\times \bigotimes_{j=2}^{n-1}[x_i,x_i+\epsilon]\times[q,x_n+p]$ cannot be filled by any cubes, because it touches a small cube and it touches $T_3$ and $T_4$ in opposite full sides of distance $p$ (see Figure \ref{fig2}).
\end{proof}

\begin{figure}[H]
\begin{minipage}[c]{0.4\linewidth}
\centering

\begin{tikzpicture}[scale=0.7]
\fill[color=white] (-3,0) -- (-3,8) -- (3,8) -- (3,0) -- cycle;
\begin{scope}[xshift=0cm,yshift=0cm]
\draw[-,line width=2pt] (0,0) -- (0,3) -- (3,3) -- (3,0) -- cycle;
\node  at (1.5,1.5) {$T$};
\end{scope}

\begin{scope}[xshift=-2.7cm,yshift=3cm]
\draw[-, line width=2pt] (0,0) -- (0,3) -- (3,3) -- (3,0) -- cycle;
\node  at (1.5,1.5) {$T_1$};
\end{scope}

\begin{scope}[xshift=2cm,yshift=3cm]
\draw[-, line width=2pt] (0,0) -- (0,3) -- (3,3) -- (3,0) -- cycle;
\node  at (1.5,1.5) {$T_2$};
\end{scope}

\begin{scope}[xshift=0.3cm,yshift=3cm]
\draw[-, line width=2pt] (0,0) -- (0,1.7) -- (1.7,1.7) -- (1.7,0) -- cycle;
\node  at (0.85,0.85) {$S$};
\end{scope}

\draw[dotted, line width=2pt] (0.3,4.7) -- (0.3,6) -- (2,6) -- (2,4.7) -- cycle;
\node  at (1.15,5.35) {$E$};

\end{tikzpicture}
\caption{}
\label{fig1}
\end{minipage}\hfill
\begin{minipage}[c]{0.4\linewidth}
\centering
\begin{tikzpicture}[scale=0.7]
\fill[color=white] (0,0) -- (0,8) -- (6,8) -- (6,0) -- cycle;

\begin{scope}[xshift=0cm,yshift=0cm]
\draw[-,line width=2pt] (0,0) -- (0,3) -- (3,3) -- (3,0) -- cycle;
\node  at (1.5,1.5) {$T$};
\end{scope}

\begin{scope}[xshift=3cm,yshift=0cm]
\draw[-, line width=2pt] (0,0) -- (0,3) -- (3,3) -- (3,0) -- cycle;
\node  at (1.5,1.5) {$T_3$};
\end{scope}

\begin{scope}[xshift=1.7cm,yshift=4.7cm]
\draw[-, line width=2pt] (0,0) -- (0,3) -- (3,3) -- (3,0) -- cycle;
\node  at (1.5,1.5) {$T_4$};
\end{scope}

\begin{scope}[xshift=2.4cm,yshift=3cm]
\draw[-, line width=2pt] (0,0) -- (0,1.7) -- (1.7,1.7) -- (1.7,0) -- cycle;
\node  at (0.85,0.85) {$S$};
\end{scope}

\draw[dotted, line width=2pt] (1.7,3) -- (1.7,4.7) -- (2.4,4.7) -- (2.4,3) -- cycle;
\node  at (2.05,3.65) {$E_1$};

\draw[dotted, line width=2pt] (4.1,3) -- (4.1,4.7) -- (4.7,4.7) -- (4.7,3) -- cycle;
\node  at (4.4,3.65) {$E_2$};

\end{tikzpicture}
\caption{}
\label{fig2}
\end{minipage}
\end{figure}

\begin{lemma}
\label{opposite}
Let $S$ be a small cube with center $s$ and T a big cube with center $s+t$ that properly touches $S$. Then there is a  big cube with center $s-t$ in the tiling.
\end{lemma}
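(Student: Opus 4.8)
The plan is to reduce everything to a single lattice-membership statement and then establish that statement geometrically. First I would fix coordinates: place $S=[0,q]^n$ with center $s=(q/2,\dots,q/2)$. By the previous lemma $S$ and $T$ share a corner, and since $p>q$ and the cubes are axis-parallel, the larger facet of $T$ then covers the whole facet of $S$ it touches; reflecting the coordinates $2,\dots,n$ (a symmetry of $S$) I may assume $T=[q,q+p]\times[0,p]^{n-1}$, touching the $+e_1$ facet of $S$ at the corner $(q,0,\dots,0)$. A short computation gives $t=\tfrac12\bigl(p+q,\,p-q,\dots,p-q\bigr)$. The key observation is that it suffices to prove $2t\in B\mathbb{Z}^n$: since $s+t$ is the center of the big cube $T$ and $s-t=(s+t)-2t$, the translate of $T$ by the lattice vector $-2t$ is again a tile of the lattice tiling, and it is a big cube centered at $s-t$.

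Next I would produce the cube on the opposite facet and convert the goal into a reflection statement. Across the facet $\{0\}\times[0,q]^{n-1}$ there is a cube; by Lemma \ref{smallcubes} it is big, and by the corner lemma it shares a corner with $S$, so it has the form $\tilde T=[-p,0]\times\prod_{i\ge 2}[\tilde c_i,\tilde c_i+p]$ with $\tilde c_i\in\{0,q-p\}$ and center $s+\tilde t$. Writing $c_S,c_B$ for fixed representatives of the small- and big-cube center lattices, every big center is $\equiv c_B$ and every small center $\equiv c_S \pmod{B\mathbb{Z}^n}$; hence $t\equiv\tilde t\equiv c_B-c_S$ and $t-\tilde t\in B\mathbb{Z}^n$. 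Comparing coordinates, $t-\tilde t=2t$ holds exactly when $\tilde c_i=q-p$ for every $i\ge 2$, i.e.\ exactly when $\tilde T=2s-T$ is the point reflection of $T$ through $s$. Thus the whole lemma reduces to the geometric claim that the big cube covering the opposite facet is the reflection of $T$ in the small cube's center, equivalently that $\tilde T$ reaches the antipodal corner $(0,q,\dots,q)$ of $S$.

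Finally I would prove this reflection claim by an unfillable-slot argument of the type already used for Figures \ref{fig1} and \ref{fig2}. Suppose $\tilde T\neq 2s-T$; then $\tilde c_k=0$ for some $k\ge 2$, so $\tilde T$ fails to reach the corner $(0,q,\dots,q)$. Restricting to a thin slab (fixing the remaining coordinates at small values near the shared corner) collapses the configuration of $S$, $T$, $\tilde T$ and their forced big neighbours along $e_k$ to the planar picture already analysed: one obtains a thin box that touches the small cube $S$ while being wedged between two big cubes in opposite full faces at distance exactly $p$, which cannot be filled, since any cube spanning the slot must have side $p$ and would then share a full facet with a big cube, violating unilaterality. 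This forces the flip in every transverse coordinate, giving $\tilde T=2s-T$ and hence $2t=t-\tilde t\in B\mathbb{Z}^n$. I expect the main obstacle to be exactly this last step in full generality: correctly identifying the bounding big cubes (the neighbours of $T$ and $\tilde T$ in the $e_k$-direction) and verifying that the thin box genuinely abuts a small cube and two parallel big faces at distance $p$ in all $n$ dimensions, rather than only in the schematic planar slice.
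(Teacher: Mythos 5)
Your normalization and reduction (your first two paragraphs) are correct and coincide with the paper's own strategy: with $S=[0,q]^n$, $T=[q,q+p]\times[0,p]^{n-1}$ and $\tilde T=[-p,0]\times\prod_{i\ge 2}[\tilde c_i,\tilde c_i+p]$, $\tilde c_i\in\{0,q-p\}$, the lemma is exactly the claim that $\tilde c_i=q-p$ for every $i\ge 2$, i.e. $\tilde T=2s-T$. (Your detour through $2t\in B\mathbb{Z}^n$ and $t-\tilde t\in B\mathbb{Z}^n$ is correct but superfluous: once $\tilde T=2s-T$ is known to be a tile, its center is already $s-t$.) The genuine gap is your third paragraph, which is the entire geometric content of the lemma, and the mechanism you sketch there fails on two counts. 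First, the slot between $T$ and $\tilde T$ is the slab $0\le x_1\le q$, of width $q$ (your \emph{small} side), not $p$; it is not an unfillable slot at all, since small cubes fit into it exactly, so it is false that ``any cube spanning the slot must have side $p$.'' Second, unilaterality is the wrong tool: it only forbids two cubes of the \emph{same} size from sharing a full facet, a cube filling the slot generally meets the bounding big cubes only in part of their facets, and small--big facet contact is permitted in any case (it occurs throughout the actual tiling). No contradiction can be extracted this way.

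The correct finish --- which is the paper's proof --- needs neither the ``forced big neighbours along $e_k$'' nor any thin slab, and its contradiction comes from Lemma \ref{smallcubes}, not from unilaterality. Suppose $\tilde c_k=0$ for some $k\ge 2$, and put $E:=[0,q]\times[0,q]^{k-2}\times[q,p]\times[0,q]^{n-k}$, where the factor $[q,p]$ (nondegenerate since $q<p$) sits in coordinate $k$. Then the facet of $E$ at $x_1=0$ lies inside the facet of $\tilde T$ (using $[0,q]\subseteq[\tilde c_i,\tilde c_i+p]$ for $i\ne k$ and $[q,p]\subseteq[0,p]$), the facet of $E$ at $x_1=q$ lies inside the facet of $T$, and the facet of $E$ at $x_k=q$ is precisely the facet of $S$ at $x_k=q$. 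Any tile meeting the interior of $E$ cannot cross the hyperplanes $x_1=0$ or $x_1=q$, since it would then overlap the interior of $\tilde T$ or of $T$; hence it has side at most $q$ and is a small cube. But then some small cube covering $E$ touches the small cube $S$ along the shared facet $x_k=q$, contradicting Lemma \ref{smallcubes}. This single box wedged between $T$ and $\tilde T$ themselves is exactly the paper's argument (in its notation $E=[0,p]^{l-1}\times[p,q]\times[0,p]^{n-l}$; beware that the paper's uniqueness section swaps the roles of $p$ and $q$ relative to Theorem \ref{main}, with $p$ the small side there, which is likely the source of your ``distance exactly $p$'' slip). As written, your proposal leaves the decisive step both unproved and pointed at the wrong contradiction.
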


\begin{proof}
W.l.o.g let $S:=[0,p]^n$ and $T:=[-q,0]\times [0,q]^{n-1}$ and let $T'$ be the big cube  properly touching $S$ in the hyperplane $\{p\}\times \mathbb{R}^{n-1}$ and write $T'=[p,p+q]\times \bigotimes_{i=2}^n I_i$ with $I_i\in \{[0,q],[p-q,p]\}$. Asume to the contrary that $T'\neq [p,p+q]\times [p-q,p]^{n-1}$ and let $l\in[2,n]$ such that $I_l=[0,q]$. Then $E:=[0,p]^{l-1}\times [p,q] \times [0,p]^{n-l}$ touches $T$ and $T'$ with a full side in parallel hyperplanes with distance $p$, so it can only be filled with small cubes. This contradicts Lemma \ref{smallcubes} because $E$ touches $S$. 
\end{proof}

Now assume $T:=[0,q]^n$ and $S:=[0,p]^{n-1}\times[q,q+p]$ are two cubes in the tiling. Let $T_i$ be the cube properly touching $S$ in the hyperplane $\mathbb{R}^{i-1}\times\{p\}\times\mathbb{R}^{n-i}$. Then $T_i=\bigotimes_{j=1}^{i-1} I_i^{(j)}\times [p,p+q]\times\bigotimes_{j=i+1}^{n-1} I_i^{(j)}\times [q,2q]$ with $I_i^{(j)}\in \{[0,q],[p-q,p]\}$. Since $T_i$ and $T_j$ do not properly intersect we have $I_i^{(j)}=[p-q,p]$ or $I_j^{(i)}=[p-q,p]$. From Lemma \ref{opposite} we know that $T'_i:=\bigotimes_{j=1}^{i-1} J_i^{(j)}\times [-q,0]\times\bigotimes_{j=i+1}^{n-1} J_i^{(j)}\times [p,p+q]$ with  $$J_i^{(j)}=\begin{cases}
[0,q] &I_i^{(j)}=[p-q,p]\\
[p-q,p] & I_i^{(j)}=[0,q]\\
\end{cases}$$ is also a cube in the tiling. Since $T'_i$ and $T_j$ cannot properly intersect,  
$J_i^{(j)}=[p-q,p]$ or $I_j^{(i)}=[0,q]$ and consequently $\{I_i^{(j)},I_j^{(i)}\}= \{[0,q],[p-q,p]\}$.

\begin{lemma}
\label{uniqueneighbour}
There exists exactly one $i\in [1,n-1]$ such that $T_i=\bigotimes_{j=1}^{i-1} [0,q]\times {[p,p+q]}\times\bigotimes_{j=i+1}^{n-1} [0,q]\times [q,2q]$.
\end{lemma}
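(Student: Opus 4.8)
The plan is to read the statement as a condition on a tournament and to split it into an easy uniqueness half and a harder existence half. The discussion preceding the lemma establishes that $\{I_i^{(j)},I_j^{(i)}\}=\{[0,q],[p-q,p]\}$ for all distinct $i,j\in[1,n-1]$. Orienting an edge $i\to j$ whenever $I_i^{(j)}=[0,q]$, this is exactly the assertion that these orientations form a tournament on the vertex set $[1,n-1]$. The cube $T_i$ has all its free intervals equal to $[0,q]$ precisely when $I_i^{(j)}=[0,q]$ for every $j\neq i$, i.e.\ when $i$ is a \emph{source} (it beats every other vertex). So the lemma is equivalent to the statement that this tournament has exactly one source.

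For the uniqueness half I would argue directly. If $i\neq i'$ were both sources, then $i$ being a source gives $I_i^{(i')}=[0,q]$ and $i'$ being a source gives $I_{i'}^{(i)}=[0,q]$, contradicting $\{I_i^{(i')},I_{i'}^{(i)}\}=\{[0,q],[p-q,p]\}$. Hence there is at most one source, and this part needs nothing beyond the already-established constraint.

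For the existence half I would invoke the standard fact that a finite tournament with no source contains a directed $3$-cycle: no source means every vertex has positive in-degree, which forces a directed cycle, and a shortest directed cycle in a tournament has length three. It therefore suffices to rule out a directed $3$-cycle $i\to j\to k\to i$. Such a cycle would place the three small cubes $T_i,T_j,T_k$ in a cyclic pattern around $S$, each sitting at the ``high'' interval $[p-q,p]$ in one of the coordinates $i,j,k$ and at the ``low'' interval $[0,q]$ in another. I would then locate, via Lemma \ref{opposite}, the big cubes adjacent to this corner of $S$, and exhibit a box that abuts one of the small cubes while being wedged between two big-cube facets lying in parallel hyperplanes a distance exactly $p$ apart. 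Such a box can be filled neither by a big cube (it touches a small cube) nor by small cubes alone (Lemma \ref{smallcubes}), which is precisely the unfillable configuration of Figures \ref{fig1} and \ref{fig2}. This contradiction rules out every directed $3$-cycle, so a source exists; in fact the tournament is then transitive, which re-proves uniqueness as well.

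I expect the main obstacle to be the explicit construction of this unfillable box for the $3$-cycle. Unlike the uniqueness half, it requires pinning down the positions of several neighbouring big cubes and choosing the trapped region carefully, and the bookkeeping may need to split according to the ratio $p/q$ (for instance according to whether $p<2q$, which governs how the intervals $[0,q]$ and $[p-q,p]$ overlap). The translation into tournament language, the tournament-theoretic reduction to $3$-cycles, and the uniqueness of the source are all routine by comparison.
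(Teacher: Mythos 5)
Your uniqueness half is correct, and it is the same one-line observation the paper makes (two cubes of the stated form would properly intersect). Your reduction of the existence half to the absence of a directed $3$-cycle is also valid tournament theory, and the target is even true (in the actual tiling the tournament is transitive). But existence is the entire content of the lemma, and there your proposal stops at a declaration of intent: the unfillable box that is supposed to rule out a $3$-cycle is never constructed, and you yourself flag it as the main obstacle. Worse, the specific plan you describe --- find a Figure \ref{fig1}/\ref{fig2}-type wedge among the big cubes adjacent to the relevant corner of $S$, located via Lemma \ref{opposite} --- provably cannot work, because a $3$-cycle is consistent with all of that local data. Concretely, take $n=4$, $p=1$, $q=2$, $T=[0,2]^4$, $S=[0,1]^3\times[2,3]$, and the $3$-cycle $I_1^{(2)}=I_2^{(3)}=I_3^{(1)}=[0,2]$, $I_1^{(3)}=I_2^{(1)}=I_3^{(2)}=[-1,1]$. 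Then $T_1=[1,3]\times[0,2]\times[-1,1]\times[2,4]$, $T_2=[-1,1]\times[1,3]\times[0,2]\times[2,4]$, $T_3=[0,2]\times[-1,1]\times[1,3]\times[2,4]$, their Lemma \ref{opposite} partners $T'_1=[-2,0]\times[-1,1]\times[0,2]\times[1,3]$, $T'_2=[0,2]\times[-2,0]\times[-1,1]\times[1,3]$, $T'_3=[-1,1]\times[0,2]\times[-2,0]\times[1,3]$, the cube $T_n=[-1,1]^3\times[3,5]$ opposite to $T$, and the big cube $[1,3]^3\times[2,4]$ over the far corner of $T$ all have pairwise disjoint interiors, cover every facet of $S$ entirely, and cover the whole slab $[0,2]^3\times[2,4]$. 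Every cube your sketch has access to is present and compatible, and no empty region touching a small cube is wedged between parallel facets at distance less than $q$; the $3$-cycle can only be contradicted by information lying strictly outside this neighbourhood of $S$.

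This is why the paper's proof looks the way it does: it never analyses pairs or triples of the $T_i$ at all, but exhibits the source directly as the cube $T'$ sitting above the opposite corner region $[q-\epsilon,q]^{n-1}\times\{q\}$ of $T$, and pins down its position using three ingredients that are absent from your sketch. First, the lattice hypothesis is used a second time, beyond Lemmas \ref{smallcubes} and \ref{opposite}, to translate $T'$ by the vector from $T$ to $T_n$ and obtain a further cube $T''$ of the tiling (in the configuration above $T''=[0,2]^3\times[5,7]$). Second, one needs that $F=([0,q]^{n-1}\setminus[0,p]^{n-1})\times[q,2q]$ is filled by big cubes. Third, the region $E$ squeezed between $F$ and $T''$ is not a box, so the plain wedge argument does not apply; instead the paper shows $E$ can contain at most one small cube and then derives a contradiction from convexity, exhibiting two points of $E$ whose midpoint is interior to $T_n$ (this is exactly what kills the configuration above). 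Until you supply an argument with this reach, the geometric core of the lemma is missing; the tournament framing and the $3$-cycle reduction are a clean packaging (indeed stronger than needed, since they would give transitivity), but they only relocate the difficulty, and the route you propose for resolving it is a dead end.
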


\begin{proof}
From Lemma \ref{opposite} we know that $T_n:=[p-q,p]^{n-1}\times[q+p,2q+p]$ is also a cube in the tiling.
Let $T'$ be such that $T'$ touches $T$ in $[q-\epsilon,q]^{n-1}\times\{q\}$ then $T'=\bigotimes_{j=1}^{n-1} [x_i,x_i+q] \times [q,2q]$ with $x_j\in [0,q)$.
Let $T''$ be such that $T''\sim T_n=T'\sim T$ then $T''=\bigotimes_{j=1}^{n-1} [p-q+x_i,p+x_i] \times [2q+p,3q+p]$. Since $S$ is the unique small cube properly touching $T$ in $\mathbb{R}^{n-1}\times \{q\}$, $F:=([0,q]^{n-1}\setminus [0,p]^{n-1})\times [q,2q]$ is filled by big cubes properly touching $T$. Now consider $E:=(\bigotimes_{j=1}^{n-1} [p-q+x_i,p+x_i]\cap ([0,q]^{n-1}\setminus [0,p]^{n-1}))\times [2q,2q+p]$. $E$ is touching $F$ and $T''$ in full opposite sides of distance $p$ so it can only by filled by small cubes and since small cubes cannot touch, by at most one small cube (see Figure \ref{fig3}).

There is an $i\in [1,n-1]$ such that $x_i\geq p$ otherwise $T'$ and $P$ would properly intersect so w.l.o.g. assume $x_1\geq p$
\begin{claim}
$x_i=0$ for all $i\in[2,n-1]$.
\end{claim}
\begin{proofofclaim}
Assume w.l.o.g $x_2>0$ for $k\in [2,n-1]$. Then $x_1:=(p-3\epsilon,p+\epsilon,p-\epsilon,...,p-\epsilon,2q+\epsilon)^T$ and $x_2:=(p+\epsilon,p-3\epsilon,p-\epsilon,...,p-\epsilon,2q+\epsilon)^T$ are inner points of $E$ but their midpoint $x_m:=(x_1+x_2)/2=(p-\epsilon,...,p-\epsilon,2q+\epsilon)^T$ is an inner point of $T_n$ so $E$ cannot be filled by a single convex set.
\end{proofofclaim}

\begin{claim}
It holds that $x_1=p$.
\end{claim}
\begin{proofofclaim}
Assume $x_1>p$. Then $E':=[p,x_1]\times [0,p]^{n-2}\times [q,p+q]$ is touching $T'$ and $S$ in full opposite sides of distance at most $q-p$ so $E'$ cannot be filled.
\end{proofofclaim}

Therefore $T'=[p,p+q]\times[0,q]^{n-2}\times [q,2q]$ and since two different cubes of the form $\bigotimes_{j=1}^{i-1} [0,q]\times [p,p+q]\times\bigotimes_{j=i+1}^{n-1} [0,q]\times [q,2q]$ would intersect, $T'$ is the unique cube of this form.

\end{proof}
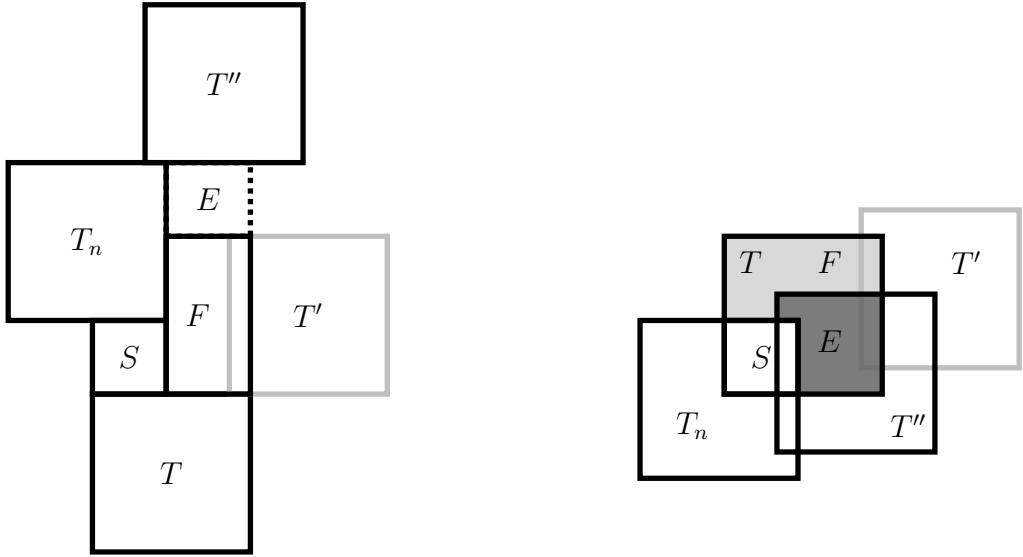
\begin{figure}[H]
\begin{tikzpicture}[scale=0.7]

\begin{scope}[xshift=0cm,yshift=0cm]
\draw[-,line width=2pt] (0,0) -- (0,3) -- (3,3) -- (3,0) -- cycle;
\node  at (1.5,1.5) {$T$};
\end{scope}

\begin{scope}[xshift=2.6cm,yshift=3cm]
\draw[-,color=lightgray, line width=2pt] (0,0) -- (0,3) -- (3,3) -- (3,0) -- cycle;
\node  at (1.5,1.5) {$T'$};
\end{scope}

\begin{scope}[xshift=1cm,yshift=7.4cm]
\draw[-, line width=2pt] (0,0) -- (0,3) -- (3,3) -- (3,0) -- cycle;
\node  at (1.5,1.5) {$T''$};
\end{scope}

\begin{scope}[xshift=-1.6cm,yshift=4.4cm]
\draw[-, line width=2pt] (0,0) -- (0,3) -- (3,3) -- (3,0) -- cycle;
\node  at (1.5,1.5) {$T_n$};
\end{scope}

\begin{scope}[xshift=0cm,yshift=3cm]
\draw[-, line width=2pt] (0,0) -- (0,1.4) -- (1.4,1.4) -- (1.4,0) -- cycle;
\node  at (0.7,0.7) {$S$};
\end{scope}

\draw[line width=2pt] (1.4,3) -- (1.4,6) -- (3,6) -- (3,3) -- cycle;
\node  at (2,4.5) {$F$};

\draw[dotted, line width=2pt] (1.4,7.4) -- (1.4,6) -- (3,6) -- (3,7.4) -- cycle;
\node  at (2.2,6.7) {$E$};

\begin{scope}[xshift=12cm,yshift=3cm]

\begin{scope}[xshift=2.6cm,yshift=0.5cm]
\draw[-,color=lightgray,line width=2pt] (0,0) -- (0,3) -- (3,3) -- (3,0) -- cycle;
\node  at (2,2) {$T'$};
\end{scope}

\fill[lightgray,opacity=0.6] (0,1.4) -- (0,3) -- (3,3) -- (3,0) -- (1.4,0) -- (1.4,1.4) -- cycle;
\node  at (2,2.5) {$F$};

\fill[darkgray,opacity=0.6] (1.4,0) -- (1.4,1.4) -- (1,1.4) -- (1,1.9) -- (3,1.9) -- (3,0) -- cycle;
\node  at (2,1) {$E$};

\draw[-, line width=2pt] (0,0) -- (0,3) -- (3,3) -- (3,0) -- cycle;
\node  at (0.5,2.5) {$T$};

\begin{scope}[xshift=0cm,yshift=0cm]
\draw[-, line width=2pt] (0,0) -- (0,1.4) -- (1.4,1.4) -- (1.4,0) -- cycle;
\node  at (0.7,0.7) {$S$};
\end{scope}

\begin{scope}[xshift=-1.6cm,yshift=-1.6cm]
\draw[-,line width=2pt] (0,0) -- (0,3) -- (3,3) -- (3,0) -- cycle;
\node  at (1,1) {$T_n$};
\end{scope}

\begin{scope}[xshift=1cm,yshift=-1.1cm]
\draw[-,line width=2pt] (0,0) -- (0,3) -- (3,3) -- (3,0) -- cycle;
\node  at (2.5,0.5) {$T''$};
\end{scope}

\end{scope}

\end{tikzpicture}
\caption{The 3-dimensional setup in the proof of Lemma \ref{uniqueneighbour} seen in cross section (left) and from above (right).}
\label{fig3}
\end{figure}

Now again let $S$ be a small cube in the tiling and w.l.o.g let $S:=[-p/2,p/2]^n$. Let $\mathcal{T}:=\{T_1^-,...,T_n^-,T_1^+,...,T_n^+\}$ be the set of cubes properly touching $S$ such that $S$ touches $T_i^-$ in $\mathbb{R}^{i-1}\times \{-p/2\}\times \mathbb{R}^{n-i}$ and $T_i^+$ in $\mathbb{R}^{i-1}\times \{p/2\}\times \mathbb{R}^{n-i}$ for all $i\in [1,n]$ and $\mathcal{C}:=\{c_1^-,...,c_n^-,c_1^+,...,c_n^+\}$ their corresponding center point.
We can now define a function $\nu:\mathcal{T}\rightarrow \mathcal{T}$ that assigns every cube $T\in\mathcal{T}$ the cube $T'$ such that $(S,T,T')$ are in the same relation as $(S,T,T_i)$ from Lemma \ref{uniqueneighbour} with the uniquely determined $i$. We also use $\nu$ as a function $\nu:\mathcal{C}\rightarrow \mathcal{C}$ with the obvious meaning. 
Consequently, for all $ i\in [1,n]$ there exists $j\in [1,n]\setminus\{i\}$ such that $\nu(v_i^-)-v_i^-=qe_i\pm pe_j$ and for symmetry reasons $\nu(v_i^+)-v_i^+=-qe_i\mp pe_j$.
Now let $G:=(V,E)$ be the directed graph with vertex set $V:=\mathcal{T}$ and edge set $E:=\{(T,\nu(T))|T\in\mathcal{T}\}$ (see Figure \ref{fig4}).

\begin{figure}[H]
\begin{tikzpicture}[yscale=1,scale=0.7]

\begin{scope}[xshift=1cm,yshift=0cm]

\draw[-,line width=2pt] (0,3) -- (0,5) -- (2,5) -- (2,3) -- cycle;
\end{scope}

\begin{scope}[xshift=0cm,yshift=0cm]

\draw[-,line width=2pt] (0,0) -- (0,3) -- (3,3) -- (3,0) -- cycle;
\end{scope}

\begin{scope}[xshift=3cm,yshift=2cm]

\draw[-, line width=2pt] (0,0) -- (0,3) -- (3,3) -- (3,0) -- cycle;
\end{scope}

\begin{scope}[xshift=-2cm,yshift=3cm]

\draw[-, line width=2pt] (0,0) -- (0,3) -- (3,3) -- (3,0) -- cycle;
\end{scope}

\begin{scope}[xshift=1cm,yshift=5cm]

\draw[-, line width=2pt] (0,0) -- (0,3) -- (3,3) -- (3,0) -- cycle;
\end{scope}

\fill (1.5,1.5) circle (0.2);
\fill (2.5,6.5) circle (0.2);
\fill (-0.5,4.5) circle (0.2);
\fill (4.5,3.5) circle (0.2);

\draw (1.5,0.75) node {$T_2^-$};
\draw (2.5,7.25) node {$T_2^+$};
\draw (-1.25,4.5) node {$T_1^-$};
\draw (5.25,3.5) node {$T_1^+$};
\draw (2,4) node {$S$};

\draw[->, line width=2pt] (1.5,1.5) -- (-0.44,4.41);
\draw[->, line width=2pt] (-0.5,4.5) -- (2.41,6.44);
\draw[->, line width=2pt] (2.5,6.5) -- (4.41,3.56);
\draw[->, line width=2pt] (4.5,3.5) --(1.56,1.59);

\draw (4,2.5) node {$\nu$};

\end{tikzpicture}
\caption{}
\label{fig4}
\end{figure}

\begin{lemma}
$G$ is a cycle of length $n$ and $\nu^n(T_i^\pm)=T_i^\mp$.
\end{lemma}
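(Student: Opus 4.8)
The plan is to distill the displacement data into a permutation of the coordinate directions together with a sign datum, restate both assertions in terms of these, and then use a lattice‑volume (determinant) count to force the whole structure into a single cycle.

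First I would record the shape of $\nu$. Write $\alpha$ for the involution $T_i^{\pm}\mapsto T_i^{\mp}$ of $\mathcal{T}$, which on centres is $v\mapsto -v$ by Lemma \ref{opposite}; it is fixed‑point free. Combining $v_i^+=-v_i^-$ with the two displacement identities $\nu(v_i^-)-v_i^-=qe_i\pm pe_j$ and $\nu(v_i^+)-v_i^+=-qe_i\mp pe_j$ gives $\nu(\alpha(v))=-\nu(v)=\alpha(\nu(v))$, so $\nu\alpha=\alpha\nu$. Since the $q$‑part of each displacement lies in coordinate $i$ and the $p$‑part in a coordinate $\sigma(i)\neq i$ depending only on $i$, the cube $\nu(T_i^{\delta})$ touches $S$ on a face of direction $\sigma(i)$; hence $\nu(T_i^{\delta})=T_{\sigma(i)}^{\eta_i\delta}$ for a map $\sigma\colon[1,n]\to[1,n]$ with $\sigma(i)\neq i$ and signs $\eta_i\in\{\pm1\}$, and the vectors $\ell_i:=\nu(v_i^-)-v_i^-=qe_i+\epsilon_i\,pe_{\sigma(i)}$ lie in $B\mathbb{Z}^n$ because they join two centres of big cubes.

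The decisive point is that $\sigma$ is a single $n$-cycle. Granting this, $\nu$ is a bijection and $G$ is one directed cycle on all $2n$ vertices, after which $\alpha$ — a fixed‑point‑free involution commuting with the cyclic generator $\nu$ — must be $\nu^{n}$, which is exactly $\nu^{n}(T_i^{\pm})=T_i^{\mp}$. To pin down $\sigma$ I would study the integer matrix $M=qI+pN$, where $N_{i,\sigma(i)}=\epsilon_i$ and all other entries vanish. As $N$ is a (signed) functional‑graph matrix, its eigenvalues are $0$ and roots of unity, hence of modulus $0$ or $1$, so none equals $-q/p$ and $M$ is invertible; thus the $\ell_i$ span a full‑rank sublattice $\Lambda\subseteq B\mathbb{Z}^n$ of covolume
\[
\lvert\det M\rvert=q^{\,n-w}\prod_{C}\bigl\lvert q^{d_C}-(-1)^{d_C}\varepsilon_C\,p^{d_C}\bigr\rvert,
\]
the product ranging over the cycles $C$ of $\sigma$, with $d_C=\lvert C\rvert$, $\varepsilon_C=\prod_{i\in C}\epsilon_i$ and $w=\sum_C d_C$ the number of $\sigma$-periodic directions. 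Since $\Lambda\subseteq B\mathbb{Z}^n$ and $\lvert\det B\rvert=p^n+q^n$, this covolume is a positive‑integer multiple of $p^n+q^n$.

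Finally I would compare the two sides. The incidences and the relation $\{I_i^{(j)},I_j^{(i)}\}=\{[0,q],[p-q,p]\}$ on which $\sigma$ and the $\epsilon_i$ depend are forced by the preceding lemmas uniformly in $p$ and $q$, so $N$ does not depend on $(p,q)$ and the displayed equality is an identity of homogeneous polynomials in $p,q$ with a constant integer factor. But $p^n+q^n$ carries only its two extreme monomials, whereas any factor $q^{\,n-w}$ with $w<n$, or a product of two or more binomials $q^{d}-(-1)^{d}\varepsilon\,p^{d}$, forces an intermediate monomial $p^{a}q^{n-a}$ with $0<a<n$; matching the factorisations then leaves only $w=n$ with one cycle of length $n$, the factor being precisely $p^n+q^n$. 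This is the step I expect to be the genuine obstacle: everything before it is local bookkeeping, while excluding several short cycles truly needs the global covolume $p^n+q^n$ of the tiling, and the passage from the single equation at our fixed $(p,q)$ to a polynomial identity — via the rigidity of the local combinatorial type in $p,q$ — is the delicate point that has to be argued carefully.
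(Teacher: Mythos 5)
Your reduction to the data $(\sigma,\epsilon)$, the lattice vectors $\ell_i=qe_i+\epsilon_i\,pe_{\sigma(i)}\in B\mathbb{Z}^n$, the determinant formula for $\det(qI+pN)$, and the fact that the covolume of the lattice they span must be a positive integer multiple of $|\det B|=p^n+q^n$ are all correct. The fatal gap is exactly the step you yourself flag: promoting the single numerical equation $q^{n-w}\prod_C\bigl|q^{d_C}-(-1)^{d_C}\varepsilon_C p^{d_C}\bigr|=k\,(p^n+q^n)$, which you know only at the one pair $(p,q)$ for which the tiling is given, to an identity of polynomials in $p,q$. Your justification --- that $(\sigma,\epsilon)$ is ``forced uniformly in $p$ and $q$'' --- has no content: the preceding lemmas restrict which configurations can occur in the given tiling, but they do not produce, for any other pair $(p',q')$, a tiling realizing the same combinatorial data; the nonexistence of such tilings when $\sigma$ is not an $n$-cycle is precisely what is being proved. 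And the numerical equation genuinely is too weak. For $n=5$ take $\sigma$ with one $3$-cycle and one $2$-cycle and $\varepsilon_C=-1$ on both cycles, so that
\[
|\det M|=(q^3-p^3)(q^2+p^2)=(q^5+p^5)+q^2p^2(q-p)-2p^5 .
\]
If $q^2(q-p)=2p^3$ --- which has a solution with $0<p<q$, e.g.\ $q=1$ and $p\approx 0.59$ the real root of $2p^3+p-1=0$ --- then $|\det M|=p^5+q^5$ on the nose, with $k=1$. At such a pair $(p,q)$ your covolume count is satisfied by a forbidden configuration, so no argument of the proposed volumetric kind can complete the proof; essentially different (geometric, not volumetric) input is required.

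There is also a secondary gap: even granting that $\sigma$ is a single $n$-cycle, your claim that $G$ is then one directed cycle on all $2n$ vertices does not follow. Orbits of $\nu$ only project onto orbits of $\sigma$: writing $\nu(T_i^\delta)=T_{\sigma(i)}^{\eta_i\delta}$, if $\prod_i\eta_i=+1$ then $G$ splits into two disjoint $n$-cycles and $\nu^n(T_i^\pm)=T_i^\pm$, the negation of the claim. Excluding this needs the relation between your two sign systems, $\eta_i=-\epsilon_i$ (obtained by comparing the $e_{\sigma(i)}$-coordinates of the centres), together with $\prod_i\epsilon_i=(-1)^{n+1}$, which you would extract from the factor being exactly $q^n+p^n$ --- i.e.\ it sits downstream of the same broken step. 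For comparison, the paper's proof needs no covolume at all: it sums the edge vectors $v_i$ around an arbitrary cycle of $G$, where they telescope to $\sum_i x_i(p-q)e_{j_i}=0$, forcing $T_j^-\in C\iff T_j^+\in C$; then the central symmetry of Lemma \ref{opposite} places $T_1^+$ antipodally on the cycle, and the sum over half the cycle is a vector with nonzero $e_l$-component for every $l$, so the half-length is $n$. That argument is purely local and combinatorial and works verbatim for every fixed pair $(p,q)$, which is exactly what the volumetric route cannot deliver.
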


\begin{proof}
Every vertex in $G$ has out-degree one so $G$ contains a cycle. Let $C:=(\bar{T}_1,...,\bar{T}_k)$  be a cycle in $G$ with center points $(\bar{c}_1,...,\bar{c}_k)$, $\bar{T}_1=\bar{T}_k$   and $3\leq k\leq 2n+1$.
For $i\in [1,k-1]$ let $j_i\in [1,n]$ such that $\bar{T}_i\in\{T_{j_i}^-,T_{j_i}^+\}$. Then $\bar{c}_i$ can be written as $$\bar{c}_i=x_i\frac{q+p}{2}e_{j_i}+x_{i+1}\frac{q-p}{2}e_{j_{i+1}}+\sum_{l\in S_i^-}\frac{p-q}{2}e_l+\sum_{l\in S_i^+}\frac{q-p}{2}e_l$$ where $\{j_i,j_{i+1}\}\uplus S_i^- \uplus S_i^+=[1,n]$ and $x_i,x_{i+1}\in \{-1,1\}$ and $$\bar{c}_{i+1}=-x_i\frac{q-p}{2}e_{j_i}+x_{i+1}\frac{q+p}{2}e_{j_{i+1}}+\sum_{l\in S_i^-}\frac{p-q}{2}e_l+\sum_{l\in S_i^+}\frac{q-p}{2}e_l.$$
Let $v_i:=\bar{c}_{i+1}-\bar{c}_i=-x_iqe_{j_i}+x_{i+1}pe_{j_{i+1}}$, then $\sum_{i=1}^{k-1} v_i=0$ because $c_k=c_1$.

$$0=\sum_{i=1}^{k-1} v_i=\sum_{i=1}^{k-1}-x_iqe_{j_i}+x_{i+1}pe_{j_{i+1}}=\sum_{i=1}^{k-1}x_i(p-q)e_{j_i}.$$

Therefore $T_{j}^-\in C \iff T_{j}^+\in C$ for all $j\in[1,n]$ and $C$ is an even cycle. W.l.o.g. let $\bar{T}_1=T_{1}^-$, then for symmetry reasons  $\bar{T}_{k/2+1}=T_{1}^+$ and 

$$(p+q)e_1+\sum_{l=2}^{n}\pm (q-p)e_l=v_{k/2+1}-v_1=\sum_{i=1}^{k/2} v_i=\sum_{i=1}^{k/2}-x_iqe_{j_i}+x_{i+1}pe_{j_{i+1}}$$

where $e_{j_1}=e_{j_{k/2+1}}=e_1$ and $\{e_{j_1},...,e_{j_{k/2}}\}$ are pairwise different. Consequently, $k/2=n$ which concludes the proof.

\end{proof}

Now consider the sets of vectors $\{b_1,b_2,...,b_n\}\subseteq B\mathbb{Z}^n$ where $$b_i=\bar{c}_{i+1}-\bar{c}_i=\begin{cases}
pe_j+qe_l &\bar{T}_{i+1}=T_j^+ \land \bar{T}_{i}=T_l^-\\
-pe_j+qe_l &\bar{T}_{i+1}=T_j^- \land \bar{T}_{i}=T_l^-\\
pe_j-qe_l &\bar{T}_{i+1}=T_j^- \land \bar{T}_{i}=T_l^+\\
-pe_j-qe_l &\bar{T}_{i+1}=T_j^- \land \bar{T}_{i}=T_l^+.\\
\end{cases}$$
Let $C:=(b_1,b_2,...,b_n)$ then $SC=A$ where $S=(s_{i,j})_{i,j\in[1,n]}\in B'_n$ and 
$$s_{i,j}=\begin{cases}
1 &\bar{T}_{i}=T_j^- \\
-1 &\bar{T}_{i}=T_j^+\\
0 &else.\\

\end{cases}$$

Now $\det(C)=\det(A)=\det(B)=p^n+q^n$ and therefore $C\mathbb{Z}^n=B\mathbb{Z}^n$ and $A$ and $B$ describe the same tiling up to symmetries.

\subsection*{Acknowledgements}
 The author acknowledges the support of the Austrian Science Fund (FWF): W1230. The author also thanks Christian Elsholtz,  Mihalis Kolountzakis and the anonymous referee for comments on this manuscript.

\printbibliography

\end{document}